\newtheorem{theorem}{Theorem}
\newtheorem{remark}{Remark}
\newtheorem{lemma}{Lemma}
\newcommand{\ie}{\textit{i}.\textit{e}.}
\newcommand{\Var}{\mathrm{var}}
\newcommand{\Cov}{\mathrm{Cov}}
\newcommand{\SE}{\text{SE}}
\newcommand{\SSmy}{\text{SS}}
\newcommand{\SD}{\text{SD}}
\newcommand{\DS}{\text{DS}}
\newcommand{\DD}{\text{DD}}
\newcommand{\NR}{\text{NR}}
\newcommand{\MF}{\text{mf}}
\newcommand{\curly}[1]{\left\{ #1 \right\}}
\DeclareMathOperator{\diag}{diag}
\journal{ArXiv}
\begin{document}

\begin{frontmatter}

\title{Sparse discovery of differential equations based on multi-fidelity Gaussian process \tnoteref{label-title}}
\tnotetext[label-title]{This work is partially supported by the National Natural Science Foundation of China (NSFC) under grant number 12101407, the Chongqing Entrepreneurship and Innovation Program for Returned Overseas Scholars under grant number CX2023068, and the Fundamental Research Funds for the Central Universities under grant number 2023CDJXY-042.}

\author[label-addr1]{Yuhuang Meng}
\ead{mengyh@shanghaitech.edu.cn}
\author[label-addr2,label-addr3]{Yue Qiu \corref{label-cor}}
\ead{qiuyue@cqu.edu.cn}
\cortext[label-cor]{Corresponding author.}

\affiliation[label-addr1]{
	organization={School of Information Science and Technology, ShanghaiTech University},
	city={Shanghai},
	postcode={201210},
	country={China.}}
\affiliation[label-addr2]{
	organization={College of Mathematics and Statistics, Chongqing University},
	city={Chongqing},
	postcode={401331},
	country={China.}}
\affiliation[label-addr3]{
	organization={Key Laboratory of Nonlinear Analysis and its Applications (Chongqing University), Ministry of Education},
	city={Chongqing},
	postcode={401331},
	country={China.}}

\begin{abstract}
	Sparse identification of differential equations aims to compute the analytic expressions from the observed data explicitly. However, there exist two primary challenges. Firstly, it exhibits sensitivity to the noise in the observed data, particularly for the derivatives computations. Secondly, existing literature predominantly concentrates on single-fidelity (SF) data, which imposes limitations on its applicability due to the computational cost. In this paper, we present two novel approaches to address these problems from the view of uncertainty quantification. We construct a surrogate model employing the Gaussian process regression (GPR) to mitigate the effect of noise in the observed data, quantify its uncertainty, and ultimately recover the equations accurately. Subsequently, we exploit the multi-fidelity Gaussian processes (MFGP) to address scenarios involving multi-fidelity (MF), sparse, and noisy observed data. We demonstrate the robustness and effectiveness of our methodologies through several numerical experiments.
\end{abstract}



\begin{keyword}
	Sparse discovery \sep Gaussian process regression \sep Multi-fidelity data
\end{keyword}

\end{frontmatter}


\section{Introduction}
\label{sec:Introduction}

Nonlinear differential equations are widely prevalent in both science and engineering applications. Nonetheless, these equations are generally unknown in many situations, which makes it difficult to understand and control the systems of interest. Fortunately, amounts of data concerning the systems of interest can be available through experiments or simulations. To address this challenge, data-driven approaches have emerged. For instance, the Koopman operator theory and dynamic mode decomposition (DMD) embed a nonlinear system into a higher-dimensional linear space via a set of observation functions, which provides a powerful and efficient tool for understanding the behavior of the nonlinear dynamic systems, especially in fluid dynamics \cite{koopman1931hamiltonian, kutz2016dynamic, lusch2018deep}.

In recent years, numerous efforts have been dedicated to learn these nonlinear equations from the observed data. Based on the diversity of the final results, these methods can be broadly categorized into two classes. The first develops black-box models to approximate the underlying differential equations. Such techniques rely on deep neural networks and numerical schemes. \citet{raissi2018multistep} combined the multistep method with deep neural networks to discover nonlinear ordinary differential equations (ODEs). They represented the right-hand-side of ODEs using deep neural networks. In a similar vein, \citet{raissi2018deep} harnessed deep neural networks and employed automatic differentiation to compute partial derivatives. \citet{rudy2019deep} introduced a robust identification algorithm for dynamic systems that combines neural networks with the Runge-Kutta method to denoise the data and discover the dynamic system simultaneously. \citet{qin2019data} used a residual network to approximate the integral form of the underlying ODEs. Conversely, the second approach seeks an analytic expression for the hidden differential equations. Sparse Identification of Nonlinear Dynamic Systems (SINDy), introduced by \citet{brunton2016discovering}, has emerged as a significant framework. It assumes the underlying differential equations could be represented by few predefined functions. The core procedure of SINDy involves the computation of derivatives and the solution of linear equations while imposing constraints for a sparse solution. \citet{long2019pdenet} employed convolutional neural networks to learn the symbolic representations of partial differential equations (PDEs). \citet{kim2021integration} pursued symbolic regression using neural networks. \citet{kang2021ident} proposed IDENT to obtain sparse solutions through Lasso and validated these solutions by assessing time evolution errors.

SINDy has proven to be a successful tool in the discovery of PDEs, primarily due to its simplicity and efficiency \cite{rudy2017datadriven}. However, there remain two critical challenges. The first involves robust approximation of the temporal or spatial partial derivatives when the observed data is corrupted by noise. In practice, only the states observed data is available whereas the temporal derivatives or partial derivatives are required to be calculated using numerical differentiation methods. However, these numerical differentiation methods magnify the noise in the observed data, which makes SINDy sensitive to noise. The second revolves around effectively leveraging the multi-fidelity data to reduce the computational cost.

Regarding the first challenge, various robust algorithms have emerged. The first category constructs a local polynomial surrogate model aimed to smooth the noisy observed data and obtain the derivatives analytically. For instance, \citet{he2022robust} employed the moving least squares (MLS) to smooth the observed data and enhance the stability of numerical differentiation when applied to noisy datasets. Similarly, \citet{sun2023pisl} represented the observed data by a set of cubic spline basis functions and the surrogate model and discovered equations are trained simultaneously. Notably, the noisy data is fitted by local quadratic and cubic polynomials in \cite{he2022robust} and \cite{sun2023pisl}, respectively. Analogous methodologies have appeared in \cite{cortiella2022priori, sandoz2023sindy, vanbreugel2020numerical}. While these approaches show promising results in specific scenarios, they primarily rely on the local regression models that entail manual selection of parameters (such as the window length) and require analysis case-by-case. The second category seeks to recover nonlinear equations from the perspective of sparse Bayesian learning. Relevance vector machine (RVM) was introduced to infer the nonlinear equations \cite{zhang2018robust, fuentes2021equation}. Furthermore, Tikhonov regularization is utilized for derivatives computations \cite{rudy2017datadriven, cortiella2021sparse}, and neural networks were employed as a surrogate model to smooth the data, which transforms numerical differentiation to automatic differentiation to decrease the impact of noise \cite{lagergren2020learning, robertstephany2023pdelearn}. Moreover, in addition to the aforementioned approaches to smooth noisy data, there are several derivative-free methods. Notably, RK4-SINDy eliminates the approximations of derivatives by employing the fourth-order Runge-Kutta method \cite{goyal2022discovery}. On the other hand, weak SINDy (WSINDy) avoids the derivative approximations by constructing the weak form of the underlying differential equations \cite{messenger2021weak, messenger2021weaka}.

As for the second challenge, related work mainly concentrate on the single-fidelity, namely high-fidelity (HF) data. Nevertheless, the accurate simulation of HF data suffers from the large computational cost. To address this issue, the integration of a small amount of HF data with suitable low-fidelity (LF) observed data which may be of lower accuracy but also lower cost, becomes a feasible approach. This is commonly referred to as the multi-fidelity modelling. The problem of harnessing multi-fidelity data for sparse identification remains unresolved. One classical MF modeling technique is the linear autoregresive method known as co-kriging \cite{kennedy2000predicting, legratiet2014recursive}, where each fidelity level model is represented by a Gaussian process and the relationship between the outputs of the LF and HF model is assumed to be linear. \citet{perdikaris2017nonlinear} proposed the nonlinear autoregressive multi-fidelity GP regression (MFGP) scheme which extends the capabilities of the linear autoregresive approach. Moreover, recent developments have seen the emergence of deep neural networks for multi-fidelity modeling \cite{meng2020composite, chakraborty2021transfer, conti2023multifidelitya}.

This paper aims to address the aforementioned challenges by introducing two methodologies. Firstly, we propose the Gaussian Process based Sparse Identification of Nonlinear Dynamics (GP-SINDy) to construct a global surrogate model, alleviating the effect of noise in the observed data. GP-SINDy leverages the non-parametric model GPR to effectively smooth the noisy data. Meanwhile, the derivatives are computed analytically within the GP framework. Notably, GPR provides valuable uncertainty quantification (UQ) information for the variables inference. We incorporate this UQ information into the weighted least squares (WLS) problem to ensure the accurate recovery of the potential functions. Furthermore, we introduce the Multi-Fidelity Gaussian Process based Sparse Identification of Nonlinear Dynamics (MFGP-SINDy) to infer the explicit representation of the differential equations using a suitable amount of LF data and limited HF data. To achieve this, MFGP \cite{perdikaris2017nonlinear} is utilized for the information fusion among different fidelity levels. The main contributions of this paper include:
\begin{enumerate}
    \item A robust algorithm for sparse identification of differential equation using GP, aka GP-SINDy, is proposed.
    \item The uncertainty of time derivative is approximated by its posterior variance in GP and this UQ information are embodied into the process of sparse identification of differential equation.
    \item MFGP-SINDy is developed to cope with the case of multi-fidelity, sparse, and noisy observed data. Meanwhile, the partial derivative computations of MFGP kernel are provided, which is the key for MFGP-SINDy.
\end{enumerate}

This paper is organized as follows. We present the problem of sparse discovery for differential equations in Section \ref{sec:Problem}. In Section \ref{sec:Methods}, we briefly review the Gaussian process regression and present our GP-SINDy and MFGP-SINDy algorithms. Several numerical experiments in Section \ref{sec:Numerical} are carried out to demonstrate the efficiency and robustness of our methods, and we summarize our paper in Section \ref{sec:Conclusion}.

\section{Problem statement}
\label{sec:Problem}

Consider a (nonlinear) differential equation (ODE or PDE) described by the following form
\begin{equation}\label{equ:nonlinear-1}
    \frac{d}{d t} \mathbf{u}=\mathbf{f}(\mathbf{u}),
\end{equation}
where $\mathbf{u} = [u_1(\mathbf{x}), u_2(\mathbf{x}), \cdots, u_d(\mathbf{x})]^T\in \mathbb{R}^{d}$ represents the state variables, the (nonlinear) evolution $\mathbf{f}(\mathbf{u})$ of the system is unknown, and $\mathbf{x}\in\mathbb{R}^D$ represent the time (and the space location), where $\mathbf{x}=t$ for ODEs, and $\mathbf{x} = (t, x)$ for PDEs, with $t \in [0, T]$. For PDEs, the right-hand-side of \eqref{equ:nonlinear-1} also contains the partial derivatives with respect to (w.r.t.) $x$, \ie, $\mathbf{f}(\mathbf{u}) = \mathbf{f}(\mathbf{u}, \mathbf{u}_x, \mathbf{u}_{xx}, \cdots)$.

The available observed data is represented by $\mathcal{D} =\{\mathbf{x}_{i} , \mathbf{u}_{i} \}$ for $i=1,\cdots ,N$, where $\mathbf{u}_{i} = \mathbf{u}_{i}^* + \boldsymbol{\varepsilon}_i$. Here, $\mathbf{u}_{i}^*$ denotes the clean data sampled from the true system, and $\boldsymbol{\varepsilon}_i$ is independent and identically distributed (i.i.d.) Gaussian white noise with $\boldsymbol{\varepsilon}_i \sim \mathcal{N}(\mathbf{0}_d, \sigma^2\mathbf{I}_d)$, where $\mathbf{0}_d$ and $\mathbf{I}_d$ denote the zero vector of size $d$ and the identity matrix of size $d \times d$, respectively. We could assemble the data $\mathcal{D}$ into the matrix form with $\mathbf{X} = [\mathbf{x}_{1}, \mathbf{x}_{2}, \cdots, \mathbf{x}_N] \in \mathbb{R}^{D\times N}$, $\mathbf{U} = [\mathbf{u}_{1}, \mathbf{u}_{2}, \cdots, \mathbf{u}_N]^T \in \mathbb{R}^{N\times d}$ and
\[
    \mathbf{U} = \mathbf{U}^* + \mathbf{E},
\]
where $\mathbf{U}^*$ represents the corresponding clean data matrix and $\mathbf{E}$ is the noise matrix. Our objective is to discover the explicit expression of $\mathbf{f}(\mathbf{u})$ based on the noisy observed data $\mathbf{U}$.

Assume that $\mathbf{f}(\mathbf{u})$ can be expressed by a linear combination of as few as predefined functions from a function library denoted by $\boldsymbol{\Phi} (\mathbf{u} )=[\phi_1(\mathbf{u} ), \phi_2(\mathbf{u} ), \cdots, \phi_{N_f}(\mathbf{u} )]$ where each $\phi_i$ is referred to as a basis function or function feature, and $N_f$ is the size of the function library. The function library $\boldsymbol{\Phi}(\mathbf{u})$ is often selected as the polynomial basis functions or trigonometric functions. For PDE systems, the basis functions also involve the partial derivatives w.r.t. the space variables, such as $\mathbf{u}_{x}$, $\mathbf{u}_{xx}$, and their combination functions $\mathbf{u}\mathbf{u}_{x}$, $\mathbf{u}\mathbf{u}_{xx}$, and $\mathbf{u}_x\mathbf{u}_{xx}$. The task of discovering the (nonlinear) system \eqref{equ:nonlinear-1} can be reformulated as the problem of solving the following linear equations while enforcing the constraint of sparse solution,
\begin{equation}\label{equ:nonlinear-2}
    \dot{\mathbf{U}} = \boldsymbol{\Phi}\mathbf{C} + \mathbf{E}'.
\end{equation}
Here, $\dot{\mathbf{U}} \in \mathbb{R}^{N\times d}$ represents the temporal derivatives of the state variables, which needs to be computed from the given data matrix $\mathbf{X}$ and $\mathbf{U}$. $\mathbf{E}'\in \mathbb{R}^{N\times d}$ is the noise matrix. The matrix $\boldsymbol{\Phi}\in\mathbb{R}^{N\times N_f}$ corresponds to the function library, and $\mathbf{C} \in \mathbb{R}^{N_f\times d}$ represents the sparse coefficient matrix to be determined.

The problem of sparse discovery of differential equations described by \eqref{equ:nonlinear-2} involves solving a linear least square problem. Partition $\mathbf{U}$, $\dot{\mathbf{U}}$ and $\mathbf{C}$ using $\mathbf{U} = [\mathbf{U}_1, \mathbf{U}_2, \cdots, \mathbf{U}_d]$, $\dot{\mathbf{U}} = [\dot{\mathbf{U}}_1, \dot{\mathbf{U}}_2, \cdots, \dot{\mathbf{U}}_d]$, and $\mathbf{C} = [\mathbf{C}_1, \mathbf{C}_2, \cdots, \mathbf{C}_d]$, where $\mathbf{U}_i$, $\dot{\mathbf{U}}_i \in\mathbb{R}^{N}$, and $\mathbf{C}_i \in\mathbb{R}^{N_f}$ represent the $i$-th column of matrices $\dot{\mathbf{U}}$ and $\mathbf{C}$, respectively. We aim to compute a sparse solution $\mathbf{C}_i\in\mathbb{R}^{N_f}$ with
\begin{equation}\label{equ:SINDy-opt-2}
    \min_{\mathbf{C}_i} \left(\dot{\mathbf{U}}_{i} -\boldsymbol{\Phi }\mathbf{C}_{i}\right)^{T}\mathbf{W}_{i}\left(\dot{\mathbf{U}}_{i} -\boldsymbol{\Phi }\mathbf{C}_{i}\right) + \lambda_0 \| \mathbf{C}_i \| _{0},
\end{equation}
for each $i=1, 2, \cdots, d$. We transform the problem of sparse discovery of differential equations into a WLS problem.

Here $\mathbf{W}_i$ is the weight matrix and should be positive semi-definite and the $\ell_0$ regularization term is employed to promote the sparsity of $\mathbf{C}_i$. If the weight matrix $\mathbf{W}_i$ is chosen as the identity matrix, this problem degenerates to an ordinary least squares formulation which is studied in classical SINDy \cite{brunton2016discovering, rudy2017datadriven}.

\section{Sparse identification using Gaussian process}
\label{sec:Methods}
In this section, we propose two methods that leverage the GPR and MFGP to recover the nonlinear differential equations, named GP-SINDy and MFGP-SINDy, respectively. First, we briefly review some fundamental concepts of GPR. Subsequently, we present the GP-SINDy approach, which incorporates the inference outcomes of GPR into the sparse discovery of nonlinear systems. Finally, we employ MFGP to fuse the nonlinear information among different fidelity levels for the sparse identification.

\subsection{Gaussian process regression}
Assume that the observation $\mathcal{D} = \curly{(\mathbf{x}_{i} ,y_{i})\Big | ,i=1,\cdots ,N}$ satisfies the following model,
\begin{equation*}
    y_i = f(\mathbf{x}_i) + \varepsilon_i.
\end{equation*}
Here, $\mathbf{x}_{i} \in \Omega \subset \mathbb{R}^D$, and the scalar $y_{i}$ is contaminated by i.i.d. noise $\varepsilon_i \sim \mathcal{N}(0, \sigma_{0}^2)$. The function $f$ could be characterized by a Gaussian process, \ie, $f(\mathbf{x}) \sim \mathcal{GP}\left(\overline{f(\mathbf{x})} ,k\left( \mathbf{x},\mathbf{x}^{\prime } ;\boldsymbol{\theta}\right)\right)$, where $\boldsymbol{\theta}$ is the hyperparameters to be determined, the mean function $\overline{f(\mathbf{x})}$ and the covariance function (kernel function) $k\left( \mathbf{x},\mathbf{x}^{\prime } ;\boldsymbol{\theta}\right)$ are defined by,
\[
 \begin{cases}
    \overline{f(\mathbf{x})}=\mathbb{E} (f(\mathbf{x})),\\
    k\left( \mathbf{x},\mathbf{x}^{\prime } ;\boldsymbol{\theta}\right) =\mathbb{E}\left[ \left(f(\mathbf{x})-\overline{f(\mathbf{x})}\right)^{T}\left(f\left( \mathbf{x}^{\prime }\right) -\overline{f\left( \mathbf{x}^{\prime }\right)}\right)\right].
\end{cases}
\]
In this paper, we choose the squared exponential (SE) kernel, \ie, $k_\SE\left(\mathbf{x} ,\mathbf{x}^{\prime } ;\boldsymbol{\theta}\right) =\boldsymbol{\theta} _{0}^{2}\exp\left( -\sum \limits _{s=1}^{D}\frac{\left(\mathbf{x}_{s} -\mathbf{x}_{s}^{\prime }\right)^{2}}{2\boldsymbol{\theta} _{s}^{2}}\right)$. For simplicity, we denote the training data by $\mathbf{X} =[\mathbf{x}_{1}, \mathbf{x}_{2}, \cdots, \mathbf{x}_N] \in \mathbb{R}^{D\times N}$ and $\mathbf{y} = [y_{1}, y_{2}, \cdots, y_N]^T\in \mathbb{R}^{N}$.

\subsubsection{Inferring the state variables}
GPR provides a non-parameter model that allows one to infer the quantities of interest. Given the training data $\mathbf{X}$ and $\mathbf{y}$, our objective is to infer the value and uncertainty of $f^* = f(\mathbf{x}^*)$ where $\mathbf{x}^* \in \mathbb{R}^D$ is called the test data. The joint Gaussian distribution can be expressed by,
\begin{equation}\label{equ:priori-state}
    \left[\begin{array}{ c }
        \mathbf{y}\\
        f^{*}
        \end{array}\right] \sim \mathcal{N}\left(\mathbf{0}_{N+1} ,\left[\begin{array}{ c c }
            \mathbf{K_\SSmy} (\mathbf{X} ,\mathbf{X} )+\sigma_{0}^{2}\mathbf{I}_{N} & \mathbf{K_\SSmy} (\mathbf{X} ,\mathbf{x}^{*} )\\
            \mathbf{K_\SSmy} (\mathbf{x}^{*} ,\mathbf{X} ) & \mathbf{K_\SSmy} (\mathbf{x}^{*} ,\mathbf{x}^{*} )
    \end{array}\right]\right).
\end{equation}
Here, the covariance matrix of the training data denoted by $\mathbf{K_\SSmy}(\mathbf{X}, \mathbf{X})\in \mathbb{R}^{N\times N}$ is defined such that $\mathbf{K_\SSmy}(\mathbf{X}, \mathbf{X})_{i, j} = k\left(\mathbf{x}_i ,\mathbf{x}_j;\boldsymbol{\theta}\right)$, where the subscript ``$\SSmy$'' represents the covariance between states. $\mathbf{K_\SSmy}(\mathbf{X} , \mathbf{x}^{*} ) \in \mathbb{R}^{N}$ represents the cross-covariance matrix between the training data $\mathbf{X}$ and the test data $\mathbf{x}^{*}$ with $\mathbf{K_\SSmy}(\mathbf{X} , \mathbf{x}^{*} )_{i} = k\left(\mathbf{x}_i ,x^*;\boldsymbol{\theta}\right)$. $\mathbf{K_\SSmy}(\mathbf{x}^{*}, \mathbf{X}) = \mathbf{K_\SSmy}(\mathbf{X} , \mathbf{x}^{*} )^T$ and $\mathbf{K_\SSmy}(\mathbf{x}^*, \mathbf{x}^*) = k\left(\mathbf{x}^*, \mathbf{x}^*;\boldsymbol{\theta}\right)$.

The posterior distribution, which represents the conditional distribution of $f^{*}$ given $\mathbf{X}$, $\mathbf{y}$, $\mathbf{x}^{*}$, and $\boldsymbol{\theta}$, can be expressed by
\begin{equation*}
    f^{*} \big| \mathbf{X} ,\mathbf{y} ,\mathbf{x}^{*} ,\boldsymbol{\theta} \sim \mathcal{N}\left(\overline{f^{*}} ,\Var(f^{*})\right),
\end{equation*}
where the mean and variance of the posterior distribution are calculated by
\begin{equation}\label{equ:pred-state}
    \begin{cases}
        \overline{f^{*}} =\mathbb{E} [f^{*} | \mathbf{X}, \mathbf{y}, \mathbf{x}^{*} ,\boldsymbol{\theta} ]=\mathbf{K_\SSmy} (\mathbf{x}^{*} ,\mathbf{X} )\left[\mathbf{K_\SSmy} (\mathbf{X} ,\mathbf{X} )+\sigma _{0}^{2}\mathbf{I}_N\right]^{-1}\mathbf{y} ,\\
        \Var(f^{*})=\mathbf{K_\SSmy} (\mathbf{x}^{*}, \mathbf{x}^{*} )-\mathbf{K_\SSmy} (\mathbf{x}^{*} ,\mathbf{X} )\left[\mathbf{K_\SSmy} (\mathbf{X} ,\mathbf{X} )+\sigma _{0}^{2}\mathbf{I}_N\right]^{-1}\mathbf{K_\SSmy} (\mathbf{X}, \mathbf{x}^{*} ).
    \end{cases}
\end{equation}

\subsubsection{Inferring the partial derivatives of state variables}
Since differentiation is a linear operator, the derivative of a Gaussian process remains a Gaussian process \cite{rasmussen2006gaussian}. Beyond inferring the mean and variance of the states data $f^* = f(\mathbf{x}^*)$, GPR enables the derivation of the partial derivatives, such as the first-order partial derivative with respect to the $j$-th component $\mathbf{x}_j^*$, denoted by $(\partial f^{*})_{j} =\frac{\partial f\left(\mathbf{x}^{*}\right)}{\partial \mathbf{x}_{j}^{*}}$. Similar to the formulas in \eqref{equ:priori-state}, a joint Gaussian distribution encompassing the derivatives and states data can be expressed by,
\begin{equation*}
    \left[\begin{array}{ c }
        \mathbf{y}\\
        (\partial f^{*})_{j}
        \end{array}\right] \sim \mathcal{N}\left(\mathbf{0} ,\left[\begin{array}{ c c }
            \mathbf{K_\SSmy}(\mathbf{X} ,\mathbf{X} )+\sigma _{0}^{2} \mathbf{I}_N & \mathbf{K_\SD}(\mathbf{X} ,\mathbf{x}^{*} )\\
            \mathbf{K_\DS}(\mathbf{x}^{*} ,\mathbf{X} ) & \mathbf{K_\DD}(\mathbf{x}^{*}, \mathbf{x}^{*} )
    \end{array}\right]\right),
\end{equation*}
where the subscript ``S'' and ``D'' represent ``State'' and ``Derivative'', respectively. $\mathbf{K_\SD}(\mathbf{X} , \mathbf{x}^{*} ) \in \mathbb{R}^{N}$ and $\mathbf{K_\DS}(\mathbf{x}^{*}, \mathbf{X}) = \mathbf{K_\SD}(\mathbf{X} , \mathbf{x}^{*} )^T$ represent two covariance matrices between the training data $\mathbf{X}$ and the test data $\mathbf{x}^{*}$, where $\mathbf{K_\SD}(\mathbf{x} , \mathbf{x}^{*} )_{i, 1} = \frac{\partial k\left(\mathbf{x}, \mathbf{x}';\boldsymbol{\theta}\right)}{\partial \mathbf{x}_j'}\big|_{\mathbf{x}=\mathbf{x}_{i} ,\mathbf{x}'=\mathbf{x}^{*}}$. Similarly, $\mathbf{K_\DD}(\mathbf{x}^{*} , \mathbf{x}^{*} ) = \frac{\partial^2 k\left(\mathbf{x}, \mathbf{x}';\boldsymbol{\theta}\right)}{\partial \mathbf{x}_j\partial \mathbf{x}'_j}\big|_{\mathbf{x}=\mathbf{x}^*, \mathbf{x}'=\mathbf{x}^{*}}$.

The conditional distribution of $(\partial f^{*})_{j}$ given $\mathbf{X} ,\mathbf{y} ,\mathbf{x}^{*}, \boldsymbol{\theta}$, can be given by,
\begin{equation*}
    (\partial f^{*})_{j} \big| \mathbf{X} ,\mathbf{y} ,\mathbf{x}^{*}, \boldsymbol{\theta} \sim \mathcal{N}\left(\overline{(\partial f^{*})}_{j} ,\Var((\partial f^{*})_{j})\right),
\end{equation*}
where the mean and variance of the posterior distribution are calculated by
\begin{equation}\label{equ:pred-der}
    \begin{cases}
        \overline{(\partial f^{*})}_{j} = \mathbb{E} [(\partial f^{*})_{j} | \mathbf{X}, \mathbf{y}, \mathbf{x}^{*}, \boldsymbol{\theta}]=\mathbf{K_\DS} (\mathbf{x}^{*} ,\mathbf{X} )\left[\mathbf{K_\SSmy} (\mathbf{X} ,\mathbf{X} )+\sigma _{0}^{2}\mathbf{I}_N\right]^{-1}\mathbf{y} ,\\
        \Var((\partial f^{*})_{j}) = \mathbf{K_\DD} (\mathbf{x}^{*}, \mathbf{x}^{*} )-\mathbf{K_\DS} (\mathbf{x}^{*} ,\mathbf{X} )\left[\mathbf{K_\SSmy} (\mathbf{X} ,\mathbf{X} )+\sigma _{0}^{2}\mathbf{I}_N\right]^{-1}\mathbf{K_\SD} (\mathbf{X}, \mathbf{x}^{*} ).
    \end{cases}
\end{equation}

The inference of the partial derivatives is utilized for the terms such as $u_t$ in ODEs, as well as $u_t, u_{x}, u_{y}, u_{xx}, u_{yy}$ in PDEs. Here, we focus solely on demonstrating how to infer the first-order partial derivatives. Further details regarding the derivation of the first-order and higher-order partial derivatives of the SE kernel are given in \ref{sec:app-der-SE}.

\subsubsection{Training the hyperparameters in GP}
The marginal likelihood at $\mathbf{X}$ can be described by
\[
    p(\mathbf{y} |\mathbf{X}) =\int p(\mathbf{y} |\mathbf{f} ,\mathbf{X}) p(\mathbf{f} |\mathbf{X}) \mathrm{d}\mathbf{f},
\]
where the priori $p(\mathbf{f} |\mathbf{X})$ is a Gaussian distribution, \ie, $\mathbf{f} |\mathbf{X} \sim \mathcal{N}(\mathbf{0} ,\mathbf{K_{\SSmy }} (\mathbf{X} ,\mathbf{X} ))$ \cite{rasmussen2006gaussian}. Meanwhile, due to the existence of noise in GP and $p(\mathbf{y} |\mathbf{f} ,\mathbf{X}) =p(\mathbf{y} |\mathbf{f})$, we have $\mathbf{y} |\mathbf{f} ,\mathbf{X} \sim \mathcal{N}\left(\mathbf{f} ,\sigma _{0}^{2}\mathbf{I}_{N}\right)$. Then, we obtain $\mathbf{y} |\mathbf{X} \sim \mathcal{N}\left(\mathbf{0} ,\mathbf{K_{\SSmy }} (\mathbf{X} ,\mathbf{X} )+\sigma _{0}^{2}\mathbf{I}_{N}\right)$. The value of hyperparameters $\boldsymbol{\theta}$ in the kernel of GP, as well as the noise variance $\sigma_{0}^{2}$ are determined by maximizing the likelihood function of $\mathbf{y} |\mathbf{X}$, which is equivalent to minimizing the negative log marginal likelihood given by,
\begin{equation}\label{equ:nlml}
    \mathcal{L}_{\text{GP}}\left(\boldsymbol{\theta} ,\sigma _{0}^{2} ;\boldsymbol{y} ,\mathbf{X}\right) =\frac{1}{2}\boldsymbol{y}^{\mathrm{T}}\left(\mathbf{K_\SSmy} (\mathbf{X} ,\mathbf{X} )+\sigma _{0}^{2}\mathbf{I}_{N}\right)^{-1}\boldsymbol{y} +\frac{1}{2}\log\left| \mathbf{K_\SSmy} (\mathbf{X} ,\mathbf{X} )+\sigma _{0}^{2}\mathbf{I}_{N}\right| +\frac{N}{2}\log (2\pi).
\end{equation}

\subsection{GP-SINDy for single-fidelity data}
In this part, w propose the GP-SINDy algorithm, which utilizes $\mathbf{X}$ and the noisy observed data and $\mathbf{U}$ to compute the sparse solution $\mathbf{C}$ for the WLS problem given by \eqref{equ:SINDy-opt-2}. GP-SINDy involves four steps,
\vspace{-.1cm}
\begin{enumerate}
    \item Constructing the GP surrogate model based on observed data $\mathbf{X}, \mathbf{U}$;
    \item Inferring the states variable, and their partial derivatives;
    \item Assembling the derivatives matrix $\dot{\mathbf{U}}_{i}$ and function library matrix $\boldsymbol{\Phi}$;
    \item Resolving the WLS problem \eqref{equ:SINDy-opt-2}.
\end{enumerate}
\vspace{-.1cm}

Since the dimension of the state variable $\mathbf{u}$ is $d$ and the single output GP is employed, it becomes imperative to compute the sparse coefficients for each individual dimension of $\mathbf{u}$. Now we need to construct $d$ independent GPR models in order to extract information for each dimension, including the states and the associated derivatives. The $i$-th GPR model is constructed based on training data $\{\mathbf{X}, \mathbf{U}_i\}$ and the kernel function $k\left(\mathbf{x}, \mathbf{x}^{\prime } ;\theta_i\right)$, where the optimal hyperparameters $\theta_i$ and noise variance $\sigma _{i}^{2}$ are determined by minimizing equation \eqref{equ:nlml}.

Then, we perform the inference at $N'$ test inputs (denoted by $\mathbf{X}_p \in\mathbb{R}^{D\times N'}$) using equation \eqref{equ:pred-state} and \eqref{equ:pred-der}. This inference involves three parts: the states, the posterior mean and variance of partial derivatives w.r.t. time denoted by $\overline{\mathbf{U}}_i$, $\overline{\dot{\mathbf{U}}}_i$, and $\Var(\dot{\mathbf{U}}_i)$, respectively. Here, $\Var(\dot{\mathbf{U}_i})$ is a matrix with size $N'\times N'$. For PDEs, we also need to calculate the partial derivatives w.r.t. the spatial variables denoted by $\overline{\mathbf{V}}_i$, such as first-order and second-order partial derivatives matrices $\overline{\mathbf{V}}_i$ $\text{(order=1)}$ and $\overline{\mathbf{V}}_i$ $\text{(order=2)}$. Typically, to ensure accuracy, the size of the prediction is set to be greater than the size of the training data, \ie, $N' \geq N$.

\begin{remark}
    Standard GPR has cubic computational complexity when learning the hyperparameters and predicting target values. To alleviate this computational burden, various algorithms have been developed \cite{snelson2005sparse, liu2020when}. Nonetheless, to keep a clear structure of this paper, we utilize conventional GPR with a constrained dataset size to maintain the computational tractability. Furthermore, at the stage of hyperparameters optimization, subsampling of the training data is an optional technique to considerably diminish the runtime.
\end{remark}

Let $\overline{\mathbf{U}} = [\overline{\mathbf{U}}_1, \overline{\mathbf{U}}_2, \cdots, \overline{\mathbf{U}}_d]$, $\overline{\mathbf{V}} = [\overline{\mathbf{V}}_1, \overline{\mathbf{V}}_2, \cdots, \overline{\mathbf{V}}_d]$. In the function library of PDEs, the term $\mathbf{u}$, $\mathbf{u}_{x}$, and $\mathbf{u}_{xx}$ are approximated by the state matrix $\overline{\mathbf{U}}$, the first-order and second-order partial derivatives matrices $\overline{\mathbf{V}}$ $\text{(order=1)}$ and $\overline{\mathbf{V}}$ $\text{(order=2)}$, respectively. The nonlinear term $\mathbf{u}\mathbf{u}_{x}$ is approximated by element-wise product of two matrices $\overline{\mathbf{U}}$ and $\overline{\mathbf{V}}$ $\text{(order=1)}$. Hence, in equation \eqref{equ:SINDy-opt-2}, the derivatives matrix $\dot{\mathbf{U}}_{i}$ is computed by the posterior mean $\overline{\dot{\mathbf{U}}}_i$, and the function library matrix $\boldsymbol{\Phi}$ is constructed using $\overline{\mathbf{U}}$ and $\overline{\mathbf{V}}$, \ie, $\boldsymbol{\Phi}  = \boldsymbol{\Phi} (\overline{\mathbf{U}}, \overline{\mathbf{V}})$. As a result, we obtain the following linear equation to be solved:
\begin{equation}\label{equ:wls-noise-pre}
    \overline{\dot{\mathbf{U}}}_i = \boldsymbol{\Phi }\mathbf{C}_{i} + \boldsymbol{\varepsilon}_i,
\end{equation}
where $\boldsymbol{\varepsilon}_i \sim \mathcal{N}(\mathbf{0}, \boldsymbol{\Sigma}_i)$ accounts for the noise and embodies the uncertainty associated with the derivative data $\overline{\dot{\mathbf{U}}}_i$. We can rewrite this equation as a WLS problem given by,
\begin{equation}\label{equ:wls-noise}
    \min_{\mathbf{C}_i} \left(\overline{\dot{\mathbf{U}}}_i -\boldsymbol{\Phi }\mathbf{C}_{i}\right)^{T}\mathbf{W}_{i}\left(\overline{\dot{\mathbf{U}}}_i -\boldsymbol{\Phi }\mathbf{C}_{i}\right).
\end{equation}
Before computing the sparse solution of $\mathbf{C}_{i}$, the following lemma illustrates the WLS estimator of equation \eqref{equ:wls-noise}.

\begin{lemma}[Gauss-Markov Theorem~\cite{hansen2022modern}]\label{lemma:gm}
    For the least squares problem 
    \[
    \min_{\mathbf{C}_i} \left(\overline{\dot{\mathbf{U}}}_i -\boldsymbol{\Phi }\mathbf{C}_{i}\right)^{T}\left(\overline{\dot{\mathbf{U}}}_i -\boldsymbol{\Phi }\mathbf{C}_{i}\right),
    \]
arising from Equation~\eqref{equ:wls-noise-pre} with $\boldsymbol{\Sigma}_i = \sigma^2\mathbf{I}$, the best linear unbiased estimator (BLUE) is given by $\hat{\mathbf{C}_{i}} = (\boldsymbol{\Phi}^T\boldsymbol{\Phi})^{-1}\boldsymbol{\Phi}^T\overline{\dot{\mathbf{U}}}_i$.
\end{lemma}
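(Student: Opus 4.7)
The plan is to follow the classical two-step Gauss-Markov argument. First I would verify that $\hat{\mathbf{C}}_i = (\boldsymbol{\Phi}^T\boldsymbol{\Phi})^{-1}\boldsymbol{\Phi}^T\overline{\dot{\mathbf{U}}}_i$ is linear in the observations and unbiased; then I would show its covariance matrix is minimal, in the Loewner order, among all linear unbiased estimators, which is the standard interpretation of ``best'' in BLUE.

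For the first step, linearity is immediate since $\hat{\mathbf{C}}_i$ is affine-free linear in $\overline{\dot{\mathbf{U}}}_i$. For unbiasedness I would substitute the model $\overline{\dot{\mathbf{U}}}_i = \boldsymbol{\Phi}\mathbf{C}_i + \boldsymbol{\varepsilon}_i$ from \eqref{equ:wls-noise-pre}, use $(\boldsymbol{\Phi}^T\boldsymbol{\Phi})^{-1}\boldsymbol{\Phi}^T\boldsymbol{\Phi} = \mathbf{I}_{N_f}$, and apply $\mathbb{E}[\boldsymbol{\varepsilon}_i] = \mathbf{0}$ to conclude $\mathbb{E}[\hat{\mathbf{C}}_i] = \mathbf{C}_i$. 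The residual representation $\hat{\mathbf{C}}_i - \mathbf{C}_i = (\boldsymbol{\Phi}^T\boldsymbol{\Phi})^{-1}\boldsymbol{\Phi}^T\boldsymbol{\varepsilon}_i$, combined with $\Cov(\boldsymbol{\varepsilon}_i) = \sigma^2\mathbf{I}$, then yields $\Cov(\hat{\mathbf{C}}_i) = \sigma^2(\boldsymbol{\Phi}^T\boldsymbol{\Phi})^{-1}$, the benchmark to beat.

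For the BLUE step I would take an arbitrary competing linear estimator $\tilde{\mathbf{C}}_i = \mathbf{A}\overline{\dot{\mathbf{U}}}_i$ with $\mathbf{A}\in\mathbb{R}^{N_f\times N}$ and observe that unbiasedness for every $\mathbf{C}_i$ forces $\mathbf{A}\boldsymbol{\Phi} = \mathbf{I}_{N_f}$. The key device is the decomposition $\mathbf{A} = (\boldsymbol{\Phi}^T\boldsymbol{\Phi})^{-1}\boldsymbol{\Phi}^T + \mathbf{D}$, under which the unbiasedness constraint becomes $\mathbf{D}\boldsymbol{\Phi} = \mathbf{0}$. Expanding $\Cov(\tilde{\mathbf{C}}_i) = \sigma^2\mathbf{A}\mathbf{A}^T$ and using $\mathbf{D}\boldsymbol{\Phi} = \mathbf{0}$ to annihilate the cross terms leaves $\Cov(\tilde{\mathbf{C}}_i) = \sigma^2(\boldsymbol{\Phi}^T\boldsymbol{\Phi})^{-1} + \sigma^2\mathbf{D}\mathbf{D}^T$, which dominates $\Cov(\hat{\mathbf{C}}_i)$ in the Loewner order because $\mathbf{D}\mathbf{D}^T \succeq \mathbf{0}$, with equality iff $\mathbf{D} = \mathbf{0}$, giving uniqueness of the OLS estimator.

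The hard part is honestly just careful bookkeeping: I need to verify that the cross terms in $\mathbf{A}\mathbf{A}^T$ cancel using the full matrix identity $\mathbf{D}\boldsymbol{\Phi} = \mathbf{0}$ rather than a coordinatewise argument. I should also flag the tacit assumption that $\boldsymbol{\Phi}^T\boldsymbol{\Phi}$ is invertible, i.e., that $\boldsymbol{\Phi}$ has full column rank $N_f$; this can fail in the sparse-discovery setting when library features are redundant, but it is the standing hypothesis here. The homoscedasticity assumption $\boldsymbol{\Sigma}_i = \sigma^2\mathbf{I}$ is essential: without it OLS is in general no longer BLUE and one must revert to the weighted form in \eqref{equ:wls-noise} with $\mathbf{W}_i = \boldsymbol{\Sigma}_i^{-1}$, which is precisely the direction the subsequent GP-SINDy construction takes by feeding in the posterior covariance from \eqref{equ:pred-der}.
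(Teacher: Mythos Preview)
Your argument is the standard and correct proof of the Gauss--Markov theorem: linearity and unbiasedness of the OLS estimator are verified directly, and the decomposition $\mathbf{A} = (\boldsymbol{\Phi}^T\boldsymbol{\Phi})^{-1}\boldsymbol{\Phi}^T + \mathbf{D}$ together with the constraint $\mathbf{D}\boldsymbol{\Phi} = \mathbf{0}$ cleanly isolates the excess covariance $\sigma^2\mathbf{D}\mathbf{D}^T \succeq \mathbf{0}$. The cross-term cancellation you flag as ``the hard part'' is routine once $\mathbf{D}\boldsymbol{\Phi} = \mathbf{0}$ is in hand, and your remarks on the full-column-rank assumption for $\boldsymbol{\Phi}$ and on the role of homoscedasticity are accurate and appropriate.

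Note, however, that the paper does \emph{not} supply its own proof of this lemma: it is stated as the classical Gauss--Markov theorem with a citation to \cite{hansen2022modern} and is then used as a black box inside the proof of Theorem~\ref{thm:WLS}, where the general-covariance case is reduced to the homoscedastic one via the Cholesky whitening $\mathbf{L}^{-1}\overline{\dot{\mathbf{U}}}_i = \mathbf{L}^{-1}\boldsymbol{\Phi}\mathbf{C}_i + \mathbf{L}^{-1}\boldsymbol{\varepsilon}_i$. So there is no ``paper's proof'' to compare against; your write-up simply fills in the textbook argument that the paper takes for granted.
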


\begin{theorem}\label{thm:WLS}
    For the weighted least squares problem~\eqref{equ:wls-noise} associated with Equation~\eqref{equ:wls-noise-pre}, the optimal solution is given by
    \[
        \hat{\mathbf{C}_{i}} = (\boldsymbol{\Phi}^T\mathbf{W}_i\boldsymbol{\Phi})^{-1}\boldsymbol{\Phi}^T\mathbf{W}_i\overline{\dot{\mathbf{U}}}_i.
    \]
For the special case where $\mathbf{W}_i=\boldsymbol{\Sigma}_i^{-1}$, $\hat{\mathbf{C}_{i}}$ becomes the best linear unbiased estimator (BLUE).
\end{theorem}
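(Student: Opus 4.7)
The plan is to split the argument into two pieces that match the two assertions of the theorem: first, a direct first-order-condition derivation of the closed-form WLS estimator; second, a reduction of the case $\mathbf{W}_i=\boldsymbol{\Sigma}_i^{-1}$ to the setting of Lemma~\ref{lemma:gm} via a whitening transformation.

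For the first part, I would expand the quadratic form $J(\mathbf{C}_i) = (\overline{\dot{\mathbf{U}}}_i - \boldsymbol{\Phi}\mathbf{C}_i)^T\mathbf{W}_i(\overline{\dot{\mathbf{U}}}_i - \boldsymbol{\Phi}\mathbf{C}_i)$ and, exploiting the symmetry of $\mathbf{W}_i$, take the gradient with respect to $\mathbf{C}_i$ to obtain $\nabla_{\mathbf{C}_i} J = -2\,\boldsymbol{\Phi}^T\mathbf{W}_i(\overline{\dot{\mathbf{U}}}_i - \boldsymbol{\Phi}\mathbf{C}_i)$. Setting this to zero produces the normal equations $\boldsymbol{\Phi}^T\mathbf{W}_i\boldsymbol{\Phi}\mathbf{C}_i = \boldsymbol{\Phi}^T\mathbf{W}_i\overline{\dot{\mathbf{U}}}_i$, and under the implicit non-degeneracy assumption that $\boldsymbol{\Phi}^T\mathbf{W}_i\boldsymbol{\Phi}$ is invertible (which follows, e.g., from $\mathbf{W}_i$ being positive definite and $\boldsymbol{\Phi}$ having full column rank), we invert to get the claimed formula. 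I would conclude this step by computing the Hessian $2\,\boldsymbol{\Phi}^T\mathbf{W}_i\boldsymbol{\Phi}\succeq 0$, so the stationary point is in fact the global minimum.

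For the BLUE statement, the idea is to pre-multiply the observation model \eqref{equ:wls-noise-pre} by a square root of $\boldsymbol{\Sigma}_i^{-1}$. Write $\boldsymbol{\Sigma}_i^{-1} = \mathbf{L}^T\mathbf{L}$ (Cholesky, using that $\boldsymbol{\Sigma}_i$ is positive definite), set $\tilde{\mathbf{y}}_i = \mathbf{L}\overline{\dot{\mathbf{U}}}_i$, $\tilde{\boldsymbol{\Phi}} = \mathbf{L}\boldsymbol{\Phi}$, $\tilde{\boldsymbol{\varepsilon}}_i = \mathbf{L}\boldsymbol{\varepsilon}_i$, and check that $\mathrm{Cov}(\tilde{\boldsymbol{\varepsilon}}_i) = \mathbf{L}\boldsymbol{\Sigma}_i\mathbf{L}^T = \mathbf{I}$, so the whitened model $\tilde{\mathbf{y}}_i = \tilde{\boldsymbol{\Phi}}\mathbf{C}_i + \tilde{\boldsymbol{\varepsilon}}_i$ satisfies the hypotheses of Lemma~\ref{lemma:gm}. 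Applying that lemma, the OLS estimator $(\tilde{\boldsymbol{\Phi}}^T\tilde{\boldsymbol{\Phi}})^{-1}\tilde{\boldsymbol{\Phi}}^T\tilde{\mathbf{y}}_i$ is BLUE among linear estimators of the transformed data; substituting back gives $(\boldsymbol{\Phi}^T\boldsymbol{\Sigma}_i^{-1}\boldsymbol{\Phi})^{-1}\boldsymbol{\Phi}^T\boldsymbol{\Sigma}_i^{-1}\overline{\dot{\mathbf{U}}}_i$, which coincides with $\hat{\mathbf{C}_i}$ from the first part evaluated at $\mathbf{W}_i = \boldsymbol{\Sigma}_i^{-1}$.

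The only conceptual nuance — and probably the main obstacle worth being careful about — is to justify that BLUE is preserved under the invertible transformation $\mathbf{L}$: every linear estimator $\mathbf{A}\overline{\dot{\mathbf{U}}}_i$ of the original data can be rewritten as $(\mathbf{A}\mathbf{L}^{-1})\tilde{\mathbf{y}}_i$, and conversely, so the class of linear unbiased estimators is in bijective correspondence and the optimization of variance is equivalent on both sides. Beyond this sanity check, the argument is essentially bookkeeping and the invertibility of $\boldsymbol{\Phi}^T\mathbf{W}_i\boldsymbol{\Phi}$ should be stated as an explicit hypothesis (or inferred from positive-definiteness of $\mathbf{W}_i$ plus full column rank of $\boldsymbol{\Phi}$, which is the standard identifiability condition in the SINDy setting).
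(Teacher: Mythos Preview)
Your proposal is correct and follows essentially the same route as the paper's proof: both derive the WLS formula via the first-order optimality condition of the quadratic objective, and both establish the BLUE property by a Cholesky-based whitening of \eqref{equ:wls-noise-pre} followed by an application of Lemma~\ref{lemma:gm}. The only cosmetic differences are that the paper factors $\boldsymbol{\Sigma}_i = \mathbf{L}\mathbf{L}^T$ and premultiplies by $\mathbf{L}^{-1}$ (rather than factoring $\boldsymbol{\Sigma}_i^{-1}$), and it inserts an explicit verification that $\hat{\mathbf{C}}_i$ is unbiased for general $\mathbf{W}_i$, whereas you add the Hessian check and the bijection-of-linear-estimators remark; none of these alter the substance of the argument.
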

\begin{proof}
    The optimality condition of~\eqref{equ:wls-noise} is given by
    \[
        \frac{\partial }{\partial \mathbf{C}_{i}} \left(\overline{\dot{\mathbf{U}}}_i -\boldsymbol{\Phi }\mathbf{C}_{i}\right)^{T}\mathbf{W}_{i}\left(\overline{\dot{\mathbf{U}}}_i -\boldsymbol{\Phi }\mathbf{C}_{i}\right) = 2\boldsymbol{\Phi }^T \mathbf{W}_{i} \left(\overline{\dot{\mathbf{U}}}_i -\boldsymbol{\Phi }\mathbf{C}_{i}\right) =0,
    \]
    and we obtain the optimal solution 
    \[
        \hat{\mathbf{C}_{i}} = (\boldsymbol{\Phi}^T\mathbf{W}_i\boldsymbol{\Phi})^{-1}\boldsymbol{\Phi}^T\mathbf{W}_i\overline{\dot{\mathbf{U}}}_i.
    \]
Since $\mathbb{E}\left(\overline{\dot{\mathbf{U}}}_i\right) = \mathbb{E}\left(\boldsymbol{\Phi }\mathbf{C}_{i} + \boldsymbol{\varepsilon}_i\right) = \boldsymbol{\Phi }\mathbf{C}_{i}$, we have
    \[
        \begin{aligned}
            \mathbb{E}\left(\hat{\mathbf{C}_{i}}\right) & = (\boldsymbol{\Phi}^T\mathbf{W}_i\boldsymbol{\Phi})^{-1}\boldsymbol{\Phi}^T\mathbf{W}_i\mathbb{E}\left(\overline{\dot{\mathbf{U}}}_i\right)\\
             & = (\boldsymbol{\Phi}^T\mathbf{W}_i\boldsymbol{\Phi})^{-1}\boldsymbol{\Phi}^T\mathbf{W}_i \boldsymbol{\Phi }\mathbf{C}_{i}\\
             & = \mathbf{C}_{i},
        \end{aligned}
    \]
    which indicates that $\hat{\mathbf{C}_{i}}$ is unbiased.

    For the positive semi-definite matrix $\boldsymbol{\Sigma}_i$, denote its Cholesky decomposition by $\boldsymbol{\Sigma}_i = \mathbf{L}\mathbf{L}^T$. Then, Equation~\eqref{equ:wls-noise-pre} can be rewritten by,
    \[
        \mathbf{L}^{-1}\overline{\dot{\mathbf{U}}}_i = \mathbf{L}^{-1}\boldsymbol{\Phi }\mathbf{C}_{i} + \mathbf{L}^{-1}\boldsymbol{\varepsilon}_i,
    \]
    where $\mathbf{L}^{-1}\boldsymbol{\varepsilon}_i \sim \mathcal{N}(\mathbf{0}, \mathbf{I})$. The least square problem 
    \[
    \min_{\mathbf{C}_i} \left(\mathbf{L}^{-1}\overline{\dot{\mathbf{U}}}_i -\mathbf{L}^{-1}\boldsymbol{\Phi }\mathbf{C}_{i}\right)^{T}\left(\mathbf{L}^{-1}\overline{\dot{\mathbf{U}}}_i -\mathbf{L}^{-1}\boldsymbol{\Phi }\mathbf{C}_{i}\right),
    \]
    which is identical to Equation~\eqref{equ:wls-noise} when $\mathbf{W}_i=\boldsymbol{\Sigma}_i^{-1}$. According to Lemma~\ref{lemma:gm}, the BLUE is given by
    \[
        \begin{aligned}
            \hat{\mathbf{C}_{i}} & = \left(\left(\mathbf{L}^{-1}\boldsymbol{\Phi}\right)^T\mathbf{L}^{-1}\boldsymbol{\Phi}\right)^{-1}\left(\mathbf{L}^{-1}\boldsymbol{\Phi}\right)^T\mathbf{L}^{-1}\overline{\dot{\mathbf{U}}}_i\\
             & = (\boldsymbol{\Phi}^T(\mathbf{L}\mathbf{L}^T)^{-1}\boldsymbol{\Phi})^{-1}\boldsymbol{\Phi}^T(\mathbf{L}\mathbf{L}^T)^{-1}\overline{\dot{\mathbf{U}}}_i\\
             & = (\boldsymbol{\Phi}^T\boldsymbol{\Sigma}_i^{-1}\boldsymbol{\Phi})^{-1}\boldsymbol{\Phi}^T\boldsymbol{\Sigma}_i^{-1}\overline{\dot{\mathbf{U}}}_i.
        \end{aligned}
    \]
\end{proof}

The Sequential Threshold Ridge regression (STRidge) algorithm introduced by \citet{brunton2016discovering, rudy2017datadriven} offers a powerful approach to seek the sparse solution of equation \eqref{equ:wls-noise}. Unlike other methods that relax the $\ell_0$ regularization to $\ell_1$ regularization, such as the least absolute shrinkage and selection operator (LASSO), STRidge enforces sparsity by applying thresholds iteratively. Based on STRidge, we propose the Sequential Threshold for Weighted Least Squares (STWLS) described by Algorithm \ref{alg:STWLS}, which incorporates the WLS estimator into the sparsity-promoting process. In order to keep the clarity of Algorithm \ref{alg:STWLS}, we replace $\overline{\dot{\mathbf{U}}}_i$ and $\mathbf{W}_i$ by the notations $\mathbf{Z}$ and $\mathbf{W}$, respectively.

\begin{algorithm}[ht] 
	\caption{Sequential Threshold for Weighted Least Squares (STWLS)}
	\label{alg:STWLS}
    {\bf Input:} $\boldsymbol{\Phi} \in\mathbb{R}^{N'\times N_f}$, $\mathbf{Z} \in\mathbb{R}^{N'}$, $\mathbf{W}\in\mathbb{R}^{N_f\times N_f}$, $\Lambda (\lambda \geq 0, \forall \lambda \in \Lambda)$, $\eta > 0$, $K$, $J$.
	\begin{algorithmic}[1]
        \State {\bf Initialization:} $\tau = 0$, $\mathcal{I}_0 = \{1, \cdots, N_f\}$, $\mathbf{c}^* = \boldsymbol{\Phi}^\dagger \mathbf{Z}$, $\mathcal{L}^* = (\boldsymbol{\Phi}\mathbf{c}^*-\mathbf{Z})^T\mathbf{W}(\boldsymbol{\Phi}\mathbf{c}^*-\mathbf{Z}) +\eta \Vert \mathbf{c}^* \Vert_0$
        \For{$\lambda \in \Lambda$}
            \For{$k = 1$ to $K$}
                \State $\mathcal{I} = \mathcal{I}_0$, $\widetilde{\boldsymbol{\Phi}} = \boldsymbol{\Phi}$, $\widetilde{\mathbf{W}} = \mathbf{W}$
                \For{$j = 1$ to $J$}
                    \State $\mathbf{c}[\mathcal{I}]= (\widetilde{\boldsymbol{\Phi}}^T \widetilde{\mathbf{W}} \widetilde{\boldsymbol{\Phi}}+\lambda \mathbf{I})^{-1}(\widetilde{\boldsymbol{\Phi}}^T \widetilde{\mathbf{W}}\mathbf{Z})$
                    \State $\mathcal{I} = \{j:| \mathbf{c} [j]| \geq \tau \}$, $\mathbf{c}[\mathcal{I}_0 \setminus \mathcal{I}] = 0$
                    \State $\widetilde{\boldsymbol{\Phi}} = \widetilde{\boldsymbol{\Phi}}[:, \mathcal{I}]$, $\widetilde{\mathbf{W}} = \widetilde{\mathbf{W}}[\mathcal{I}, \mathcal{I}]$
                \EndFor
                \State $\mathbf{c}[\mathcal{I}]= (\widetilde{\boldsymbol{\Phi}}^T \widetilde{\mathbf{W}} \widetilde{\boldsymbol{\Phi}})^{-1}(\widetilde{\boldsymbol{\Phi}}^T \widetilde{\mathbf{W}}\mathbf{Z})$, $\mathbf{c}[\mathcal{I}_0 \setminus \mathcal{I}] = 0$
                \State $\mathcal{L} = (\boldsymbol{\Phi}\mathbf{c}-\mathbf{Z})^T\mathbf{W}(\boldsymbol{\Phi}\mathbf{c}-\mathbf{Z}) +\eta \Vert \mathbf{c} \Vert_0$
                \If{$\mathcal{L} \leq \mathcal{L}^*$}
                    \State $\mathcal{L}^* = \mathcal{L}$, $\mathbf{c}^*=\mathbf{c}$
                \EndIf
                \State $\tau = 1.05\min_{i}\{|c^*_i|: c^*_i\neq 0\}$
            \EndFor
        \EndFor
    \end{algorithmic}
    {\bf Output:} Sparse solution $\mathbf{c}^*$.
\end{algorithm}

In STWLS, $\Lambda$ is the set that contains all candidate values of $\lambda$. For a fixed $\lambda \in \Lambda$, there are two loops, the outer loop is to determine the suitable threshold $\tau$, while the inner loop seeks the sparse solution under the specified threshold iteratively. It is noteworthy that in STWLS, $\lambda$ and $\eta$ are two key parameters for sparse solutions. A series of ridge regression techniques are employed to identify the support function terms in the inner loop, wherein $\lambda$ serves as the regularization parameter. Non-zero $\lambda$ is employed to highlight the correct feature terms. Different from the standard ridge regression, $\lambda$ in STWLS merely has impact on the support function terms (line 6 in Algorithm \ref{alg:STWLS}), which makes an indirect contribution to the final coefficient matrix. Another key parameter $\eta$ is utilized to decide whether to accept the sparse solution from the inner loop or not, which makes the trade-off between the residual of linear equations and the sparsity of the solution. When $\eta$ is smaller, the sparsity loss term is less penalized, which leads to the defectively sparse solution. Conversely, if it is too large, the solution exhibits an exceptionally high degree of sparsity since the sparsity loss term becomes dominant.

\begin{remark}
    In the numerical experiments of Section \ref{sec:Numerical}, the final outcomes are little sensitive to $\eta$. In practice, $\lambda$ is typically difficult to choose, especially when the training data is limited, small $\lambda$ would result in an excess of the function terms while larger values would yield much fewer terms. Analogous phenomenon also present in \cite{cortiella2021sparse}, where corner point criterion in Pareto curve is exploited to balance the fitting and sparsity of the solution. Here, we choose $\lambda$ in a naive but effective manner by setting a candidate set $\Lambda$. Numerical results in Section \ref{sec:Numerical} demonstrate its effectiveness.
\end{remark}

\begin{remark}
    In Algorithm \ref{alg:STWLS}, we address the WLS problem in the context of sparse identification of nonlinear systems, where the variance of noise is approximated by the posterior variance of temporal partial derivatives in GPR. Similar techniques can be found in related literatures, for example, the generalized least squares problem \cite{messenger2021weak, chen2023datadriven}, wherein the covariance matrix (corresponding to the inverse of the aforementioned weight matrix) is approximated through the residual analysis of methods in \cite{messenger2021weak, chen2023datadriven}.
\end{remark}

Finally, for the WLS problem \eqref{equ:wls-noise}, STWLS algorithm (Algorithm \ref{alg:STWLS}) is employed to derive the sparse solution $\mathbf{C}_i$. We leverage the posterior variance of the partial derivative w.r.t. time to approximate the covariance matrix of $\boldsymbol{\varepsilon}_i$, \ie, the weight matrix $\mathbf{W}_i = \left(\diag\left(\Var(\dot{\mathbf{U}_i})\right)\right)^{-1}$. And our Gaussian Process based SINDy (GP-SINDy) algorithm is summarized in Algorithm \ref{alg:GPSINDy}.

\begin{algorithm}[H] 
	\caption{Gaussian Process based SINDy (GP-SINDy)}
	\label{alg:GPSINDy}
	{\bf Input:} Data $\mathcal{D} =\left\{\mathbf{X} \in \mathbb{R}^{D\times N}, \mathbf{U} \in \mathbb{R}^{N\times d}\right\}$, test input $\mathbf{X}_p \in\mathbb{R}^{D\times N'}$, parameters of STWLS $\Lambda$, $\eta$, $K$, $J$.
	\begin{algorithmic}[1]
        \For{$i \in \{ 1, \cdots, d\}$}
            \State Construct the GPR model with $\{\mathbf{X}, \mathbf{U}_i\}$, and learn the optimal hyperparameters, $\theta_i$ and $\sigma^{2}_i$.
            \State Infer the posterior $\overline{\mathbf{U}}_i$, $\overline{\dot{\mathbf{U}}}_i$, $\Var\left(\dot{\mathbf{U}_i}\right)$, and $\overline{\mathbf{V}}_i$ at test input $\mathbf{X}_p$.
            \State Compute the weight matrix $\mathbf{W}_i = \left(\diag\left(\Var(\dot{\mathbf{U}_i})\right)\right)^{-1}$.
        \EndFor
        \State Assemble the data matrices, $\overline{\mathbf{U}} = [\overline{\mathbf{U}_1}, \overline{\mathbf{U}_2}, \cdots, \overline{\mathbf{U}_d}]$, $\overline{\mathbf{V}} = [\overline{\mathbf{V}_1}, \overline{\mathbf{V}_2}, \cdots, \overline{\mathbf{V}_d}]$.
        \State Compute the function library matrix $\boldsymbol{\Phi}  = \boldsymbol{\Phi} (\overline{\mathbf{U}}, \overline{\mathbf{V}})$.
        \For{$i \in \{ 1, \cdots, d\}$}
            \State $\mathbf{C}[:, i]$ = STWLS($\boldsymbol{\Phi}$, $\overline{\dot{\mathbf{U}}}_i$, $\mathbf{W}_{i}$, $\Lambda$, $\eta$, $K$, $J$).
        \EndFor
	\end{algorithmic}
	{\bf Output:} Sparse coefficient matrix $\mathbf{C}$.
\end{algorithm}

\subsection{MFGP-SINDy for multi-fidelity data}
In this part, we will briefly review the nonlinear autoregressive multi-fidelity GP regression (MFGP) \cite{perdikaris2017nonlinear} to assimilate the nonlinear information from the MF data into the inference of the HF model. Let $\mathcal{D}^l = \curly{(\mathbf{x}^l_{i} ,y^l_{i})\Big | ,i=1,\cdots ,N^l}$ with $l=1, \cdots, L$ denote the observed MF data, which satisfies,
\begin{equation*}
    y^l_{i} = f^l(\mathbf{x}^l_{i}) + \varepsilon^l_i,
\end{equation*}
where $\mathbf{x}^l_{i}\in \Omega^l \subset \mathbb{R}^D$ represents the input, $y^l_{i}\in\mathbb{R}$ is the noisy output of level $l$, and the noise term $\varepsilon^l_i$ is i.i.d. at each level with Gaussian distribution $\mathcal{N}\left(0, \left(\sigma_\MF^l\right)^2\right)$. We rewrite the training data as $\mathcal{D}^l = \curly{(\mathbf{X}^l, \mathbf{y}^l)}$ with $l=1, \cdots, L$ where $\mathbf{X}^l =[\mathbf{x}_{1}^l, \mathbf{x}_{2}^l, \cdots, \mathbf{x}_{N^l}^l] \in \mathbb{R}^{D\times {N^l}}$ and $\mathbf{y}^l = [y_{1}^l, y_{2}^l, \cdots, y_{N^l}^l]^T\in \mathbb{R}^{N^l}$.
Here, $\mathcal{D}^L$ corresponds to the HF data, and $\mathcal{D}^l$ for $(l=1, \cdots, L-1)$ are the LF data sorted by increasing the level of accuracy. For the function $f^1$, it can be described by a Gaussian process, \ie,
\[
    f^1(\mathbf{x}) \sim \mathcal{GP}\left(\overline{f^1}(\mathbf{x}) ,k^1\left( \mathbf{x},\mathbf{x}^{\prime } ;\theta^1\right)\right),
\]
where $k^1\left( \mathbf{x},\mathbf{x}^{\prime } ;\theta^1\right)$ is an SE kernel. We use the notation $\Cov(\cdot, \cdot)$ to denote the covariance between two random variables. Assume that the $f^1, f^2, \cdots, f^L$ follow the Markov property,
\[
    \Cov\left(f^l(\mathbf{x}), f^{l-1}(\mathbf{x}') \big | f^{l-1}(\mathbf{x})\right) = 0, \forall \mathbf{x} \neq \mathbf{x}', l=2,\cdots, L,
\]
which means that given the value of $f^{l-1}(\mathbf{x})$, we can learn nothing more about $f^l(\mathbf{x})$ from any other information $f^{l-1}(\mathbf{x}')$, for $\mathbf{x} \neq \mathbf{x}'$ \cite{kennedy2000predicting, perdikaris2017nonlinear}. Hence, the correlation between two levels of models, $f^{l-1}$ and $f^l$, can be described by,
\begin{equation}\label{equ:mfgp-1}
    f^{l}(\mathbf{x}) = z^{l}(f^{l-1}(\mathbf{x})) + \delta^l(\mathbf{x}),
\end{equation}
where $z^{l}:\mathbb{R}\rightarrow \mathbb{R}$ is an unknown nonlinear function and $\delta^l(\mathbf{x})$ is a Gaussian process independent of $z^{l}$. Specifically, when $z^l$ is linear, equation \eqref{equ:mfgp-1} degenerates to a linear autoregressive structure described in previous works \cite{kennedy2000predicting, legratiet2014recursive}.

However, equation \eqref{equ:mfgp-1} involves a complicated problem that results in huge computational complexity. To address this issue, we can approximate $z^{l}$ with a Gaussian process, which embeds the LF model $f^{l-1}(\mathbf{x})$ into the higher fidelity model $f^{l}(x)$. This is achieved by substituting $f^{l-1}(\mathbf{x})$ with its posterior estimation $\overline{{f}^{l-1}}(\mathbf{x})$. As a result, the correlation \eqref{equ:mfgp-1} can be transformed as:
\begin{equation*}\label{equ:mfgp-2}
    f^{l}(\mathbf{x}) = g^{l}(\mathbf{x}, \overline{{f}^{l-1}}(\mathbf{x})),
\end{equation*}
where $g^{l}$ is a nonlinear function, which maps from a $(D+1)$-dimensional subspace to a scalar in $\mathbb{R}$. It can be represented by a Gaussian process, and prior distribution of the $l$-th level function $f^l(\mathbf{x})$ is
\[
    f^l(\mathbf{x}) \sim \mathcal{GP}\left(\overline{f^l}(\mathbf{x}), k^l\left(\left(\mathbf{x}, \overline{{f}^{l-1}}(\mathbf{x})\right), \left(\mathbf{x}', \overline{{f}^{l-1}}(\mathbf{x}')\right);\theta^{l}\right)\right),
\]
where $k^l\left(\left(\mathbf{x}, \overline{{f}^{l-1}}(\mathbf{x})\right), \left(\mathbf{x}', \overline{{f}^{l-1}}(\mathbf{x}')\right);\theta^{l}\right)$ is the corresponding kernel function and $\theta^{l}$ is its hyperparameter. Here, $\delta^l(\mathbf{x})$ can be considered implicitly due to the first input argument $\mathbf{x}$.

However, $\mathbf{x}$ and $\overline{{f}^{l-1}}(\mathbf{x})$ belong to different spaces. For the sake of derivation of partial derivative of the kernel, we apply the separation of variables to this kernel and define its decomposition structure by,
\begin{equation}\label{equ:mfgp-kernel}
    k^{l}\left(\left(\mathbf{x} ,\overline{f^{l-1}} (\mathbf{x} )\right) ,\left(\mathbf{x} ',\overline{f^{l-1}} (\mathbf{x} ')\right) ;\theta ^{l}\right) =k_{\rho }(\mathbf{x} ,\mathbf{x} ';\theta _{\rho }) k_{f}\left(\overline{f^{l-1}} (\mathbf{x} ),\overline{f^{l-1}} (\mathbf{x} ');\theta _{f}\right) +k_{\delta }(\mathbf{x} ,\mathbf{x} ';\theta _{\delta }),
\end{equation}
where $k_\rho$, $k_f$, and $k_\delta$ are three SE kernels. In this paper, this kernel is referred to as the MFGP kernel. It bridges the gap between the input $\mathbf{x}$ and the estimated LF function values $\overline{{f}^{l-1}}(\mathbf{x})$ in a flexible form.

\subsubsection{MFGP construction}
In this paper, we focus on coping with the bi-fidelity data, \ie, $L=2$, wherein $f^1(\mathbf{x}) \sim \mathcal{GP}\left(\overline{f^1}(\mathbf{x}) ,k^1\left( \mathbf{x},\mathbf{x}^{\prime } ;\theta^1\right)\right)$ and $f^2(\mathbf{x}) \sim \mathcal{GP}\left(\overline{f^2}(\mathbf{x}), k^2\left(\left(\mathbf{x}, \overline{{f}^{1}}(\mathbf{x})\right), \left(\mathbf{x}', \overline{{f}^{1}}(\mathbf{x}')\right);\theta^{2}\right)\right)$ represent the LF and HF GP models. We use the abbreviated notations $\mathcal{GP}^{1}$ and $\mathcal{GP}^{2}$ to represent them, respectively. Analogous to GPR, MFGP consists of two stages: training and inferring, and this bi-fidelity GP scheme can be readily extended to deal with even higher fidelity levels data.

In the training stage, we firstly construct the $\mathcal{GP}^{1}$ model using the LF data $\mathcal{D}^1 = \curly{(\mathbf{x}^1, \mathbf{y}^1)}$ and a SE kernel $k^1(\mathbf{x}, \mathbf{x}';\theta^1)$. The hyperparameter $\theta^1$ and noise variance $\left(\sigma_\MF^1\right)^2$ are obtained by minimizing the negative log marginal likelihood in equation \eqref{equ:nlml}. Then we compute the posterior mean denoted by $\overline{{f}^{1}}(\mathbf{x}^2)$ of $\mathcal{GP}^{1}$ at $\mathbf{x}^2$ using equation \eqref{equ:pred-state}. Moreover, $\mathcal{GP}^{2}$ is devised based on the data $\left\{\left(\mathbf{x}^2, \overline{{f}^{1}}(\mathbf{x}^2)\right), \mathbf{y}^2\right\}$ and the kernel $k^{2}((\mathbf{x}, \overline{{f}^{1}}(\mathbf{x})), (\mathbf{x}', \overline{{f}^{1}}(\mathbf{x}'));\theta^2)$ defined in \eqref{equ:mfgp-kernel}. The hyperparameter $\theta^2$ and $\left(\sigma_\MF^2\right)^2$ is optimized by minimizing equation \eqref{equ:nlml}.

Before the description of the inference stage, we analyze the structure of the MFGP model and make a reasonable assumption. Let $\mathbf{x}^* \in \mathbb{R}^D$ be a predictive point. Given that the LF model $f^1(\mathbf{x}^*)$ subject to a GP prior, its posterior distribution remains to be GP, which enables the analytical computations of both the posterior mean and variance. Nevertheless, the HF model $f^2(\mathbf{x}^*)$ follows a GP prior, its posterior distribution do not follow a GP anymore due to the nonlinear mapping $f^1(\mathbf{x}^*)$, wherein the uncertainty no longer subject to a Gaussian distribution. The posterior distribution of the HF model $p\left( f^{2}\left( \mathbf{x}^*\right)\right)$ can be approximated merely by a numerical approach \cite{perdikaris2017nonlinear}, such as the Monte-Carlo integration,
\[
    \begin{aligned}
        p\left( f^{2}\left( \mathbf{x}^*\right)\right) & =p\left( f^{2}\left( \mathbf{x}^* ,f^{1}\left( \mathbf{x}^*\right)\right) \bigg| f^{1}\left( \mathbf{x}^*\right) ,\mathbf{x}^* ,\mathbf{y}^{2} ,\mathbf{x}^{2}\right)\\
        & =\int p\left( f^{2}\left( \mathbf{x}^* ,f^{1}\left( \mathbf{x}^*\right)\right) \bigg| ,\mathbf{x}^* ,\mathbf{y}^{2} ,\mathbf{x}^{2}\right) p\left(f^{1}\left( \mathbf{x}^*\right)\right)\mathrm{d} \mathbf{x}^*,
    \end{aligned}
\]
where the posterior distribution of the LF model $p\left(f^{1}\left( \mathbf{x}^*\right)\right)$ is a Gaussian distribution, but the conditional probability $p\left( f^{2}\left( \mathbf{x}^* ,f^{1}\left( \mathbf{x}^*\right)\right) \bigg| ,\mathbf{x}^* ,\mathbf{y}^{2} ,\mathbf{x}^{2}\right)$ is non-Gaussian.

However, such numerical computations result in unaffordable computational cost. To mitigate this burden, we introduce the following assumption, which allows to compute the posterior estimates of the HF model analytically. Here we assume that the posterior variance of the LF model $\overline{{f}^{1}}(\mathbf{x}^*)$ is sufficiently small to be ignored, thereby, it could be treated as a deterministic variable. Subsequently, the posterior distribution of the HF model turns into as a Gaussian distribution and the posterior mean and variance could be computed analytically. Since we are focusing on the inference of the HF model, this assumption is reasonable and efficient in practical computations. Numerical experiments illustrate the efficiency of this treatment.

During the inference stage, we initially compute the posterior mean of $\mathcal{GP}^{1}$ at predictive input $\mathbf{x}^*$ by equation \eqref{equ:pred-state} denoted by $\overline{{f}^{1}}(\mathbf{x}^*)$. We obtain the posterior mean and variance of $\mathcal{GP}^{2}$ at $\left(\mathbf{x}^*, \overline{{f}^{1}}(\mathbf{x}^*)\right)$ represented by $\overline{{f}^{2}}(\mathbf{x}^*)$, $\Var\left({f}^{2}(\mathbf{x}^*)\right)$, respectively. Furthermore, we can also employ MFGP to infer the posterior mean and variance of partial derivatives of $\mathcal{GP}^{2}$ at $\left(\mathbf{x}^*, \overline{{f}^{1}}(\mathbf{x}^*)\right)$ by equation \eqref{equ:pred-der} denoted by $(\overline{\partial {f}^{2}}(\mathbf{x}^*))_{j}$, $\Var\left((\overline{\partial {f}^{2}}(\mathbf{x}^*))_{j}\right)$. Here, the subscript $j$ means the partial derivatives to the $j$-th element of $\mathbf{x}^*$, so we tend to use the $\overline{\partial {f}^{2}}(\mathbf{x}^*)$ and $\Var\left(\overline{\partial {f}^{2}}(\mathbf{x}^*)\right)$ to abbreviate them. Details on partial derivatives of the MFGP kernel \eqref{equ:mfgp-kernel} are demonstrated in \ref{sec:app-der-MFGP}.

The MFGP algorithm for the case of bi-fidelity data ($L=2$) is summarized in Algorithm \ref{alg:MFGP}, where step 1 to 3 focus on learning the optimal hyperparameters in the kernel and the noise variance, while steps 4 to 6 are concerned with predictions for the state and its partial derivatives of the HF model.

\begin{remark}
    As for the inference of partial derivatives, we can utilize the automatic differentiation techniques to track the gradient information during the process of state prediction, such as PyTorch \cite{paszke2019pytorch}. This approach offers the advantage that we only need to conduct state prediction once, wherein the derivatives are obtained through automatic differentiation. However, it brings two drawbacks. One is that the posterior mean results of first-order partial derivatives is accurate, whereas accuracy diminishes for the second and higher-order derivatives due to the existence of noise. The other is that this method cannot offer an approximation to the posterior variance.
\end{remark}

\begin{algorithm}[ht]
	\caption{Multi-fidelity GP regression (MFGP)}
	\label{alg:MFGP}
	{\bf Input:} MF training data $\mathcal{D}^l = \curly{(\mathbf{X}^l, \mathbf{y}^l)}, l=1, 2$, predictive input $\mathbf{x}^*$, kernel functions $k^1(\mathbf{x}, \mathbf{x}';\theta^1)$, and $k^{2}((\mathbf{x}, \overline{{f}^{1}}(\mathbf{x})), (\mathbf{x}', \overline{{f}^{1}}(\mathbf{x}'));\theta^2)$.
	\begin{algorithmic}[1]
        \State Construct the $\mathcal{GP}^{1}$ based on $\mathcal{D}^1$ and $k^1(\mathbf{x}, \mathbf{x}';\theta^1)$, and learn $\theta^1$ and $\left(\sigma_\MF^1\right)^2$ by minimizing equation \eqref{equ:nlml}.
        \State Compute the posterior mean $\overline{{f}^{1}}(\mathbf{x}^2)$ of $\mathcal{GP}^{1}$ at $\mathbf{x}^2$ using equation \eqref{equ:pred-state}.
        \State Construct $\mathcal{GP}^{2}$ based on $\left\{\left(\mathbf{x}^2, \overline{{f}^{1}}(\mathbf{x}^2)\right), \mathbf{y}^2\right\}$ and $k^{2}((\mathbf{x}, \overline{{f}^{1}}(\mathbf{x})), (\mathbf{x}', \overline{{f}^{1}}(\mathbf{x}'));\theta^2)$ in equation \eqref{equ:mfgp-kernel}, and learn $\theta^2$ and $\left(\sigma_\MF^2\right)^2$ by minimizing equation \eqref{equ:nlml}.
        \State Compute the posterior mean $\overline{{f}^{1}}(\mathbf{x}^*)$ of $\mathcal{GP}^{1}$ at $\mathbf{x}^*$ using equation \eqref{equ:pred-state}.
        \State Calculate the posterior mean $\overline{{f}^{2}}(\mathbf{x}^*)$ and variance $\Var\left(\overline{{f}^{2}}(\mathbf{x}^*)\right)$ of $\mathcal{GP}^{2}$ at $\left(\mathbf{x}^*, \overline{{f}^{1}}(\mathbf{x}^*)\right)$ using equation \eqref{equ:pred-state}.
        \State Calculate the posterior mean $\overline{\partial {f}^{2}}(\mathbf{x}^*)$ and variance of partial derivative $\Var\left(\overline{\partial {f}^{2}}(\mathbf{x}^*)\right)$ of $\mathcal{GP}^{2}$ at $\left(\mathbf{x}^*, \overline{{f}^{1}}(\mathbf{x}^*)\right)$ using equation \eqref{equ:pred-der}.
	\end{algorithmic}
    {\bf Output:} $\overline{{f}^{2}}(\mathbf{x}^*)$, $\Var\left({f}^{2}(\mathbf{x}^*)\right)$, $\overline{\partial {f}^{2}}(\mathbf{x}^*)$, $\Var\left(\overline{\partial {f}^{2}}(\mathbf{x}^*)\right)$.
\end{algorithm}

\subsubsection{MFGP-SINDy}

Similar with GP-SINDy, MFGP-SINDy also consists of three steps: constructing the MFGP model based on MF data, inferring the state and its partial derivatives, and computing a sparse solution of a WLS problem.

Firstly, MFGP is exploited as a surrogate model, which enables one to sample more points in the grid. Next, we use MFGP to infer the variables of interest. The MFGP kernel provides a sophisticated and powerful approach wherein the partial derivatives are calculated though the differentiation of the kernel function. This analytical manner is highly appropriate suitable for sparse and noisy observed data. Several local surrogate approaches can smooth the noisy data, but their parameters are difficult to select. Since the outcomes heavily depend on the choice of parameters, particularly when the noisy data is scarce. GP and MFGP have the merit that they do not need to select the parameters manually.

Finally, we obtain the discovered system via solving the WLS problem, which is accomplished by STWLS (Algorithm \ref{alg:STWLS}). Combined with GP-SINDy (Algorithm \ref{alg:GPSINDy}) and MFGP (Algorithm \ref{alg:MFGP}), the Multi-Fidelity Gaussian Process based SINDy (MFGP-SINDy) algorithm for sparse and noisy MF observed data is outlined in Algorithm \ref{alg:MFGPSINDy}. The number of HF observed data is less than that of LF due to fact that the cost of numerical simulation of HF model is more expensive. The size of HF data is a significant factor for the trade-off between computational cost and accuracy, which will be demonstrated in Section \ref{sec:Numerical}.

\begin{algorithm}[H] 
	\caption{Multi-Fidelity Gaussian Process based SINDy (MFGP-SINDy)}
	\label{alg:MFGPSINDy}
	{\bf Input:} MF data $\mathcal{D}^l =\left\{\mathbf{  X}^l \in \mathbb{R}^{D\times N_l}, \mathbf{U}^l \in \mathbb{R}^{N_l\times d}\right\}$ with $l=1, 2$, test input $\mathbf{X}_p \in\mathbb{R}^{D\times N'}$, parameters of STWLS $\Lambda$, $\eta$, $K$, $J$.
	\begin{algorithmic}[1]
        \For{$i \in \{ 1, \cdots, d\}$}
            \State Construct the MFGP model using training data $\{\mathbf{X}^l, \mathbf{U}_i^l\}$ with $l=1, 2$ and the kernel functions $k^1$, $k^2$, and learn the optimal hyperparameters $\theta_i^1$, $\theta_i^2$, $\left(\sigma_\MF^1\right)^{2}_i$ and $\left(\sigma_\MF^2\right)^{2}_i$ using steps 1 to 3 in Algorithm \ref{alg:MFGP}.
            \State Infer the posterior $\overline{\mathbf{U}}_i$, $\overline{\dot{\mathbf{U}}}_i$, $\Var\left(\dot{\mathbf{U}_i}\right)$, and $\overline{\mathbf{V}}_i$ at test input $\mathbf{X}_p$ using steps 4 to 6 in Algorithm \ref{alg:MFGP}.
            \State Compute the weight matrix $\mathbf{W}_i = \left(\diag\left(\Var(\dot{\mathbf{U}_i})\right)\right)^{-1}$.
        \EndFor
        \State Assemble the data matrices, $\overline{\mathbf{U}} = [\overline{\mathbf{U}_1}, \overline{\mathbf{U}_2}, \cdots, \overline{\mathbf{U}_d}]$, $\overline{\mathbf{V}} = [\overline{\mathbf{V}_1}, \overline{\mathbf{V}_2}, \cdots, \overline{\mathbf{V}_d}]$.
        \State Compute the function library matrix $\boldsymbol{\Phi}  = \boldsymbol{\Phi} (\overline{\mathbf{U}}, \overline{\mathbf{V}})$.
        \For{$i \in \{ 1, \cdots, d\}$}
            \State $\mathbf{C}[:, i]$ = STWLS($\boldsymbol{\Phi}$, $\overline{\dot{\mathbf{U}}}_i$, $\mathbf{W}_{i}$, $\Lambda$, $\eta$, $K$, $J$).
        \EndFor
	\end{algorithmic}
    {\bf Output:} Sparse coefficient matrix $\mathbf{C}$.
\end{algorithm}

\section{Numerical experiments}
\label{sec:Numerical}
In this section, we conduct three numerical experiments to demonstrate the performance of GP-SINDy and MFGP-SINDy, including the Lorenz system, the Burgers' equation, and the KdV equation. In the discovery of Lorenz system, we focus on the performance of GP-SINDy in handling SF observed data. The subsequent part involves the application of MFGP-SINDy to MF data scenarios, which contains the discovery of the Burgers' equation and the KdV equation. Our approach is compared with other alternative methods in the context of MF data. Meanwhile, the effect of training data sizes on the outcomes is explored.

In GP-SINDy or MFGP-SINDy, all hyperparameters in the kernel and the noise variance are initialized randomly, and they are learned by minimizing equation \eqref{equ:nlml}. This is implemented through the \texttt{Rprop} optimizer in PyTorch \cite{paszke2019pytorch}. For the parameters in STWLS, we set $K=20$, $J=10$ for all experiments.

Prior to conducting the experiments, we outline some experimental setups. The noise-free data of ODEs is generated utilizing the \texttt{ode45} function within MATLAB that employs absolute and relative tolerance set of $10^{-8}$. In the case of PDEs, the noise-free data of HF model is generated using the \texttt{spin} class from the \texttt{Chebfun} library \cite{driscoll2014ChebfunGuide, robertstephany2023pdelearn}. The observed data is obtained from the clean data through adding i.i.d. Gaussian white noise with zero mean and variance $\sigma^2$, where $\sigma$ is determined by the noise ratio $\sigma_\NR$ and the clean data matrix $\mathbf{U}^*$,
\[
    \sigma = \sigma_\NR \frac{\| \mathbf{U}^* \| _{F}}{\sqrt{Nd}}.
\]
Here the positive parameter $\sigma_\NR$ is manually specified, and $\| \mathbf{U}^* \| _{F}$ represents the Frobenius norm of the matrix $\mathbf{U}^*\in\mathbb{R}^{N\times d}$. This formulation indicates that the noise-to-signal ratio is approximately equivalent to $\sigma$ \cite{messenger2021weak}, \ie, $\sigma \approx \| \mathbf{U} - \mathbf{U}^* \|_{F} / \| \mathbf{U}^* \|_{F}$.

Let $\mathbf{C}^{*}\in \mathbb{R}^{N_f\times d}$ denote the true sparse coefficient matrix, we use three error metrics to evaluate the accuracy of $\mathbf{C}$ obtained by GP-SINDy and MFGP-SINDy algorithms. The first is the maximum error of the true non-zero coefficients,
\[
    E_\infty(\mathbf{C}) =\max_{\mathbf{C}_{i, j}^{*}\neq 0}\frac{| \mathbf{C}_{i, j} -\mathbf{C}_{i, j}^{*} |}{| \mathbf{C}_{i, j}^{*} |}.
\]
The second is the relative $\ell_2$ error between the discovered coefficients $\mathbf{C}$ and the true coefficients $\mathbf{C}^{*}$,
\[
    E_2(\mathbf{C}) =\frac{\| \mathbf{C} -\mathbf{C}^{*} \| _{F}}{\| \mathbf{C}^{*} \| _{F}},
\]
And the last metric is the true positivity ratio ($TPR$) \cite{lagergren2020learning},
\[
    TPR(\mathbf{C}) = \frac{TP}{TP + FN + FP},
\]
where $TP$ represents the number of correctly discovered non-zero coefficients, $FN$ represents the number of coefficients falsely discovered as zero, and $FP$ represents the number of coefficients falsely discovered as non-zero. $TPR$ measures the percentage of correctly identified function terms, and a $TPR$ value of 1 indicates a perfect discovery of non-zero function terms.

\subsection{Lorenz system}
\label{sec:Lorenz}

The Lorenz system is described as,
\begin{equation*}
    \begin{cases}
        \dot{x} =\beta _{1} (y-x),\\
        \dot{y} =x( \beta _{2} -z) -y,\\
        \dot{z} =xy-\beta _{3} z.
    \end{cases}
\end{equation*}
Here we choose $\beta_1 = 10, \beta_2 = 28, \beta_3 = -\frac{8}{3}$, and the time domain $t\in [0, 10]$, the initial value $[x_0, y_0, z_0] = [-8, 7, 27]$. The training data is generated with time-step $\Delta_t =0.001$. During the process of sparse learning, polynomials up to the third-order are utilized as the function library, \ie, $\boldsymbol{\Phi} (u )$ = $\boldsymbol{\Phi} (x, y, z)$ = $[$$1$, $x$, $y$, $z$, $x^2$, $xy$, $xz$, $y^2$, $yz$, $z^2$, $x^3$, $x^2y$, $x^2z$, $xy^2$, $xyz$, $xz^2$, $y^3$, $y^2z$, $yz^2$, $z^3$$]$. For GP-SINDy, the predictive input is the same with the input in the training data. SE kernel is adopted in this experiment. For the parameters in STWLS, we set $\Lambda$ = $\{10^{i}$ with $i=0, -1, -2, \cdots, -5\}$ and $\eta = 5$.

We plot the ground truth and noisy observed data with a noise ratio $\sigma_\NR=0.1$ (the exact standard deviation $\sigma = 1.59742$) in Figure \ref{fig:lorenz-butter}. Our GP-SINDy algorithm is applied to these observed data, where observed data is subsampled evenly with size of 626 during the training of hyperparameters in order to mitigate the computation afford of GPR. The results of hyperparameters and noise variance in GPR are shown in Table \ref{tab:lorenz-hyperparameters-GP}. The corresponding states prediction and derivatives prediction results with uncertainty approximation obtained from GPR are exhibited in Figure \ref{fig:lorenz-state} and \ref{fig:lorenz-derivative}, respectively. We observe that the posterior variance of the derivatives data are greater near the boundary points compared to the interior points, which attributes to the availability of information on only one side rather than both sides.

\begin{figure}[H] 
    \centering
    \includegraphics[width=0.3\textwidth]{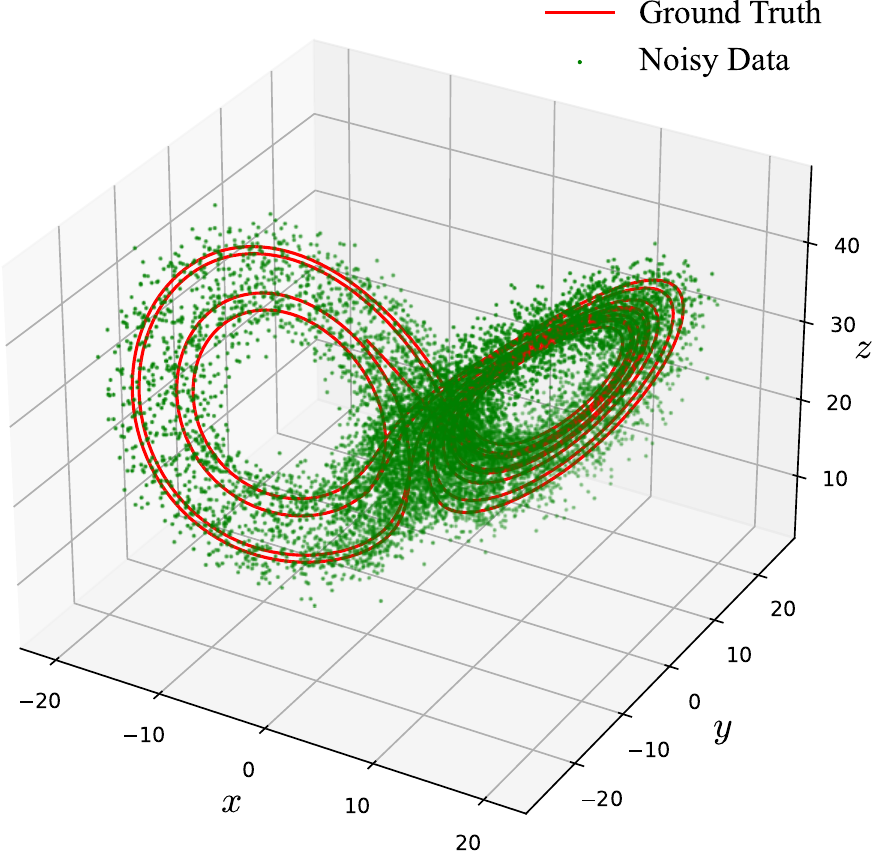}
    \caption{The ground truth and noisy data with $\sigma_\NR=0.1$.}
    \label{fig:lorenz-butter}
\end{figure}

\begin{table}[htbp]
    \centering
    \captionsetup{width=0.4\textwidth}
    \caption{Optimal hyperparameters results of GPR in GP-SINDy. Here, $x$, $y$, and $z$ are state variables, which correspond to three GP models.}
    \label{tab:lorenz-hyperparameters-GP}
    \begin{tabular}{lccc}
        \hline
         & $(\theta_i)_0$ & $(\theta_i)_1$ & $\sigma _{i}^{2}$ \\ \hline
        $x (i = 1)$ & $0.826$ & $0.023$ & $0.047$ \\ \hline
        $y (i = 2)$ & $0.865$ & $0.014$ & $0.034$ \\ \hline
        $z (i = 3)$ & $1.049$ & $0.015$ & $0.034$ \\
        \hline
    \end{tabular}
\end{table}

\begin{figure}[ht] 
    \centering
    \includegraphics[width=0.8\textwidth]{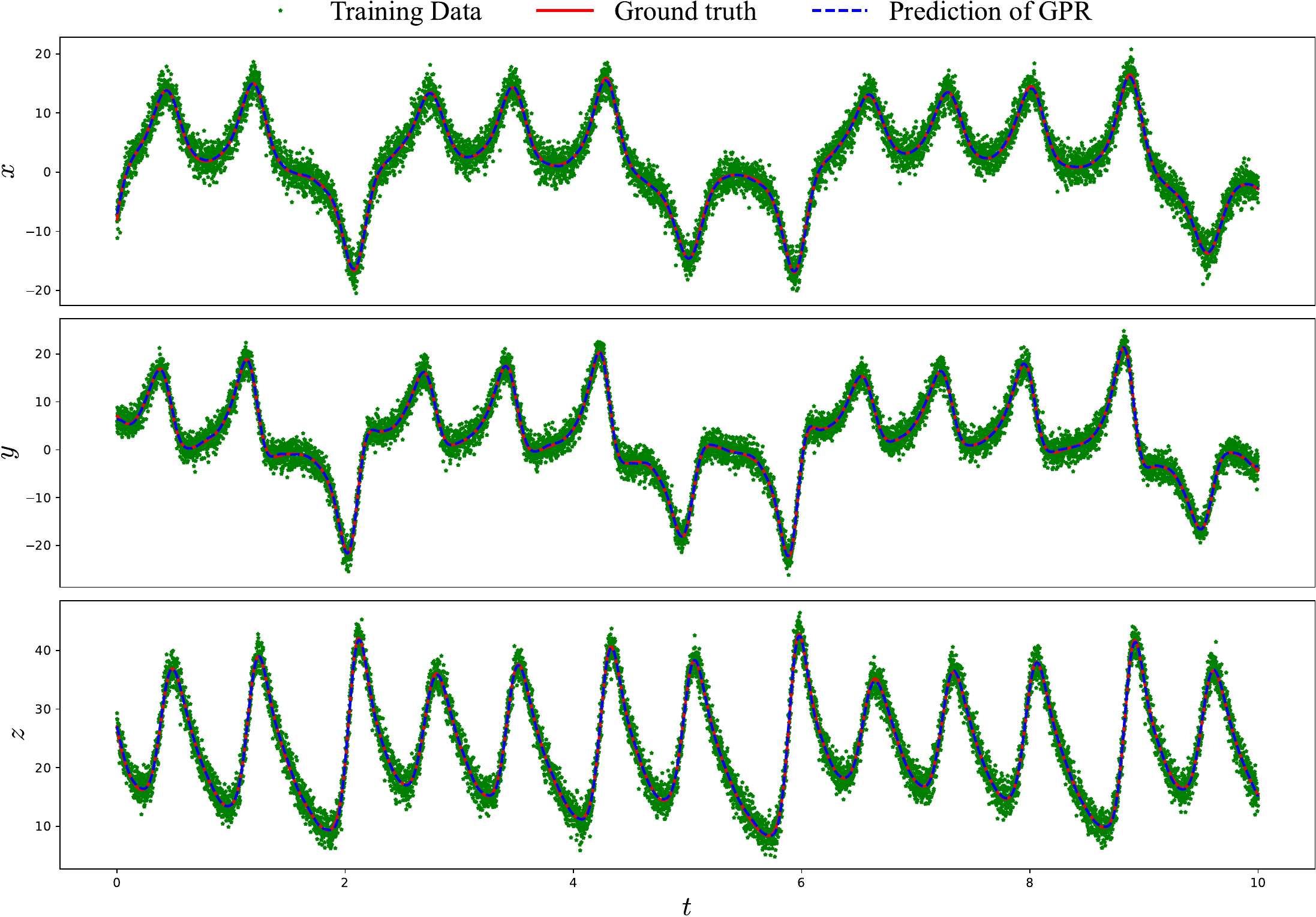}
    \caption{Comparison between the ground truth, training data, and states prediction obtained from GPR.}
    \label{fig:lorenz-state}
\end{figure}

\begin{figure}[H] 
    \centering
    \includegraphics[width=0.8\textwidth]{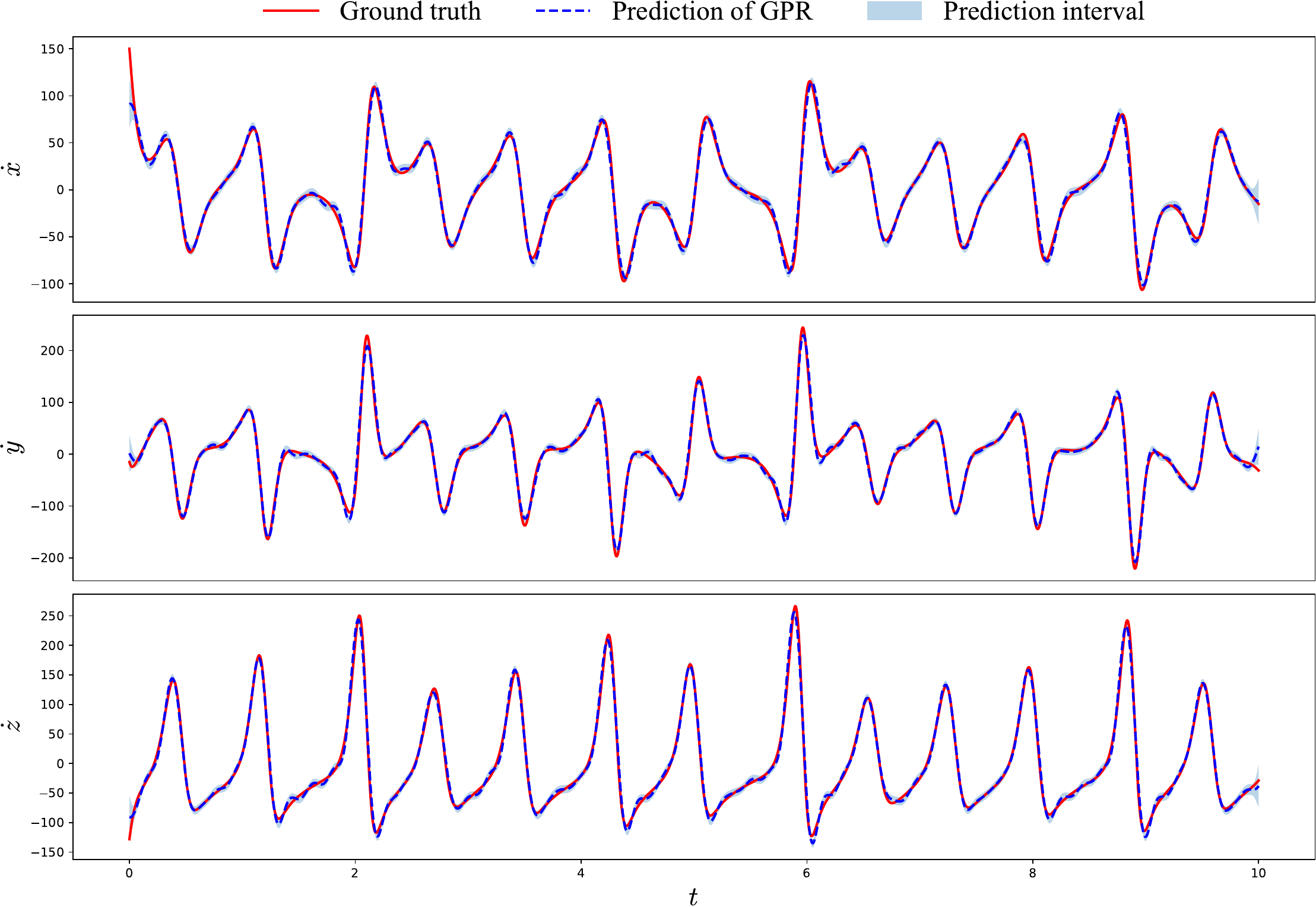}
    \caption{The prediction and uncertainty approximation of derivatives.}
    \label{fig:lorenz-derivative}
\end{figure}

Next, we compare GP-SINDy with SINDy \cite{brunton2016discovering}, Savitzky-Golay SINDy (SG-SINDy) \cite{Kaptanoglu2022}, and Modified SINDy (M-SINDy) \cite{kaheman2022automatic}, wherein the same function library is exploited. In SG-SINDy, the Savitzky-Golay filter is exploited to fit a series of local polynomials for the training data, and the derivatives are calculated by the finite difference method. Here, we set the order of local polynomials to 3 and the length of window to 10. Besides, M-SINDy is another robust algorithm that combines the automatic differentiation and time-stepping constraints. The discovered results and coefficients error of these algorithms under $\sigma_\NR=0.05, 0.1$ are summarized in Table \ref{tab:lorenz-2} and the best results are listed in bold. The comparison of discovered dynamic systems are shown in Figure \ref{fig:lorenz_gpsindy-cmp}. As for a local fitting model, the performance of SG-SINDy closely depends on the selection of parameters (the length of window). M-SINDy requires to solve an ill-posed optimization problem, since the degree of freedom of the optimized variables is more than the known data, and we observe that it is prone to overfit and fall into a local minimal. As pointed out by \cite{kaheman2022automatic}, it may produce a system that has a similar behavior to the actual system in given time interval, but the symbolic repression is extremely different from the exact solution.

\begin{table}[H] 
    \centering
    \caption{Discovered results and error under two noise levels ($\sigma_\NR \in \{0.05, 0.1\}$), where excessive results in SINDy are replaced by ``$\cdots$''.}
    \label{tab:lorenz-2}
    \footnotesize
    \begin{tabular}{p{1.25cm}|lll|p{0.7cm}|p{0.7cm}|p{0.7cm}}
        \hline
        \textbf{Method} & \multicolumn{3}{c|}{\textbf{Discovered equation} ($\sigma_\NR=0.05$)} & $E_\infty(\%)$ & $E_2(\%)$ &$TPR$\\ \hline
        SINDy   & $\begin{aligned}
                    \dot{x}= & -0.814 + 4.078y - 0.265xz\\
                             & + 0.149yz
                    \end{aligned}$
                & $\begin{aligned}
                    \dot{y}= & -1.249 + 28.651x - 1.276y\\
                    & + 0.131z - 0.113x^2 + \cdots
                    \end{aligned}$
                & $\begin{aligned}
                    \dot{z}= & -1.761 - 0.358x + 0.3y\\
                    & - 2.58z + 0.998xy
                    \end{aligned}$
                & 100.00 & 37.70 & 0.33 \\ \hline
        SG-SINDy    & $\dot{x} =0.284 - 9.83x + 9.746y$
                    & $\dot{y} =27.853x - 1.273y - 0.988xz$
                    & $\dot{z} =3.312 - 2.778z + 0.986xy$
                    & 27.34 & 10.64 & 0.78 \\ \hline
        M-SINDy     &$\dot{x} =-9.908x + 9.914y$
                    &$\dot{y} =28.083x - 1.033y - 1.002xz$
                    &$\dot{z} =-0.396 - 2.654z + 1.002xy$
                    & 3.30 & 1.35 & 0.88 \\ \hline
        GP-SINDy    & $\dot{x} =-9.944x + 9.933y$
                    & $\dot{y} =27.916x - 0.991y - 0.999xz$
                    & $\dot{z} =-2.663z + 0.999xy$
                    & \textbf{0.93} & \textbf{0.39} & \textbf{1} \\
        \hline
        \hline
        \textbf{Method} & \multicolumn{3}{c|}{\textbf{Discovered equation} ($\sigma_\NR=0.1$)} & $E_\infty(\%)$ & $E_2(\%)$ &$TPR$\\ \hline
        SINDy   & $\begin{aligned}
                        \dot{x}= & 56.124 + 9.489x - 0.815y\\
                                & - 5.446z - 0.502xz + \cdots
                    \end{aligned}$
                & $\begin{aligned}
                        \dot{y}= & 24.768x + 1.948y + 0.12z\\
                                & - 0.151x^2 + 0.345xy + \cdots
                    \end{aligned}$
                & $\begin{aligned}
                        \dot{z}= & -4.71 - 1.04x + 0.79y\\
                                & - 2.437z + 1.001xy
                    \end{aligned}$
                & 294.76 & 193.44 & 0.35 \\ \hline
        SG-SINDy    & $\dot{x} =0.82 - 9.429x + 9.156y$
                    & $\begin{aligned}
                            \dot{y}= & 2.955 + 26.468x - 1.609y\\
                                    & - 0.114z - 0.935xz
                        \end{aligned}$
                    & $\begin{aligned}
                            \dot{z}= & 0.176 + 0.28y - 2.568z\\
                                    & 0.687xy + 0.187y^2
                        \end{aligned}$
                    & 60.94 & 11.62 & 0.54 \\ \hline
        M-SINDy     & $\dot{x} =-9.747x + 9.769y$
                    & $\dot{y} =28.165x - 1.062y - 1.004xz$
                    & $\dot{z} =-0.965 - 2.636z + 1.007xy$
                    & 6.15 & 3.30 & 0.88 \\ \hline
        GP-SINDy    & $\dot{x} =-9.881x + 9.861y$
                    & $\dot{y} =27.721x - 0.942y - 0.995xz$
                    & $\dot{z} =-2.659z + 0.998xy$
                    & \textbf{5.81} & \textbf{1.08} & \textbf{1} \\
        \hline
    \end{tabular}
\end{table}

\begin{figure}[H] 
    \centering
    \includegraphics[width = 0.9\textwidth]{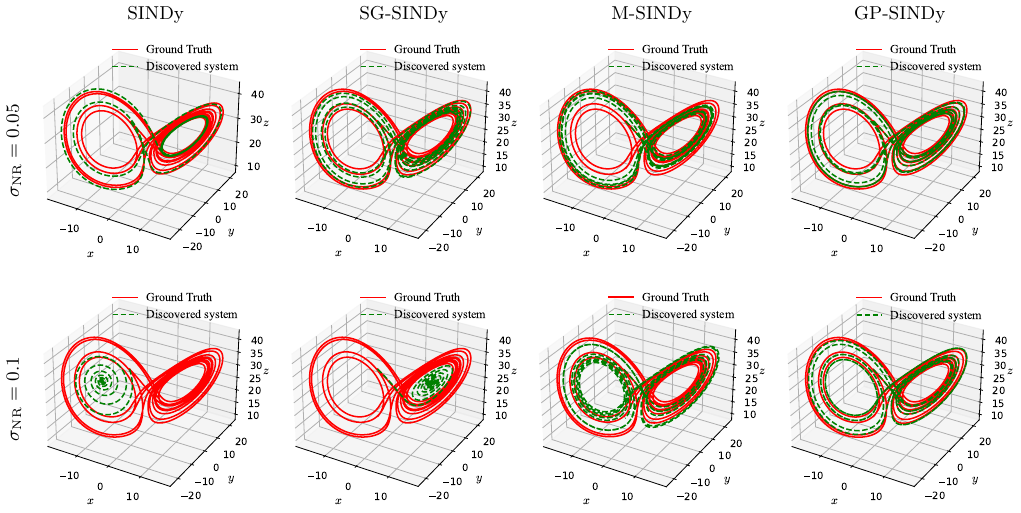}
    \caption{The visualization of discovered dynamic systems, and these figures correspond to Table \ref{tab:lorenz-2}, where all initial conditions are set to [-8, 7, 27].}
    \label{fig:lorenz_gpsindy-cmp}
\end{figure}

\subsection{Burgers' equation}
\label{sec:Burgers}

Consider the 1D Burgers' equation,
\begin{equation}\label{equ:burgers}
    u_{t} = \nu u_{xx} - uu_{x},
\end{equation}
with the initial condition $u(x, 0) = -\sin(\pi x /8)$ and the periodic boundary condition $u(-8, t) = u(8, t)$, where $t\in[0, 10]$, $x\in[-8, 8]$. Here, the diffusion coefficient $\nu=0.5$.

\subsubsection{GP-SINDy for SF data}
\label{sec:Burgers-GP}
The SF data is generated by the fine spatiotemporal grid using the \texttt{spin} class from the \texttt{Chebfun} library. The time step size is $\Delta {t} = 0.002$ and the spatial grid size $\Delta {x} = 0.00625$. The training data is obtained by adding noise and downsampling of the clean data. Here, we downsample the data evenly with size $n_s$, which is referred to the size of SF data. For the sake of notation simplicity, we describe the training data using its size, instead of the downsampling step. For instance, $n_s$ $(41 \times 65 = 2665)$ represents a uniform spatiotemporal grid with time step 41 and space size 65. The function library contains spatial partial derivatives up to the second-order and polynomials up to the second-order, \ie, $\boldsymbol{\Phi} (u ) = [1, u, u_x, u_{xx}, u^2, uu_x, uu_{xx}, u_{x}^2, u_xu_{xx}, u_{xx}^2]$. The predictive points are set as the spatiotemporal grid in $ [0, 10] \cup [-8, 8]$ with $\Delta {t}' = 0.1$, $\Delta {x}' = 0.0625$ throughout this experiment. Besides, we set $\Lambda$ = $\{i/2$ with $i=0, 1, \cdots, 10\}$ and $\eta = 150$ for Algorithm \ref{alg:GPSINDy}.

First, we demonstrate the performance of GP-SINDy with SF data, here we use three SF data sizes, $n_{s1}$ $(41 \times 65)$, $n_{s2}$ $(41 \times 33)$, and $n_{s3}$ $(41 \times 17)$. We show the identification results and error under various noise levels in Table \ref{tab:Burgers-GP-uniform-symbol}, where $\sigma$ represents the exact standard deviation of noise. We observe that for each SF data size $n_s$, the results deteriorate as the noise level get larger. Results in Table \ref{tab:Burgers-GP-uniform-symbol} show that GP-SINDy is capable of discovering the correct differential equation when the observation data is sparse and noisy. In the case of noise level $\sigma_\NR=0.3$, although the corresponding coefficient errors $E_\infty$ and $E_2$ are too large to accept, GP-SINDy could discover the correct function terms, \ie,$TPR$ is 1. Additionally, as the SF training data gets coarser from $n_{s1}$ to $n_{s2}$ to $n_{s3}$, the results get worse consistently at each specific noise level. The optimal hyperparameters of GPR are obtained by minimizing the negative log marginal likelihood, and we list the optimal values under different noise levels in Table \ref{tab:Burgers-hyperparameters-GP}.

\begin{table}[H] 
    \centering
    \caption{Identification results and error of GP-SINDy with three SF data sizes.}
    \label{tab:Burgers-GP-uniform-symbol}
    \begin{tabular}{c|l|l|l|l|l|l}
        \hline
        \textbf{SF data size} & $\sigma_\NR$ & $\sigma$ & \textbf{Discovered equation} & $E_\infty$ (\%) & $E_2 (\%)$ & $TPR$\\
        \hline
        \multirow{4}*{$n_{s1}$}
            & 0.02 & 0.009 & $u_t = 0.495 u_{xx} - 0.983 uu_x$ & 1.69 & 1.57 & 1 \\ \cline{2-7}
            & 0.1 & 0.046 & $u_t = 0.479 u_{xx} - 0.933 uu_x$ & 6.73 & 6.30 & 1 \\ \cline{2-7}
            & 0.2 & 0.093 & $u_t = 0.469 u_{xx} - 0.87 uu_x$ & 12.96 & 11.92 & 1 \\ \cline{2-7}
            & 0.3 & 0.139 & $u_t = 0.462 u_{xx} - 0.822 uu_x$ & 17.83 & 16.30 & 1 \\ \hline
        \addlinespace
        \hline
        \multirow{4}*{$n_{s2}$}
            & 0.02 & 0.009 & $u_t = 0.492 u_{xx} - 0.969 uu_x$ & 3.08 & 2.85 & 1 \\ \cline{2-7}
            & 0.1 & 0.046 & $u_t = 0.467 u_{xx} - 0.89 uu_x$  & 11.02 & 10.29 & 1 \\ \cline{2-7}
            & 0.2 & 0.093 & $u_t = 0.445 u_{xx} - 0.826 uu_x$ & 17.40 & 16.33 & 1 \\ \cline{2-7}
            & 0.3 & 0.139 & $u_t = 0.426 u_{xx} - 0.775 uu_x$ & 22.48 & 21.17 & 1 \\ \hline
        \addlinespace
        \hline
        \multirow{4}*{$n_{s3}$}
            & 0.02 & 0.009 & $u_t = 0.488 u_{xx} - 0.945 uu_x$ & 5.51 & 5.03 & 1 \\ \cline{2-7}
            & 0.1 & 0.046 & $u_t = 0.457 u_{xx} - 0.843 uu_x$ & 15.72 & 14.58 & 1 \\ \cline{2-7}
            & 0.2 & 0.093 & $u_t = 0.446 u_{xx} - 0.774 uu_x$ & 22.61 & 20.79 & 1 \\ \cline{2-7}
            & 0.3 & 0.139 & $u_t = 0.433 u_{xx} - 0.694 uu_x$ & 30.55 & 27.98 & 1 \\ \hline
    \end{tabular}
\end{table}

\begin{table}[H]
    \centering
    \captionsetup{width=0.4\textwidth}
    \caption{Optimal hyperparameter results of GPR for data size $n_{s1}$ under various noise levels, where $i=1$.}
    \label{tab:Burgers-hyperparameters-GP}
    \begin{tabular}{llcccc}
        \hline
        $\sigma_\NR$ & $\sigma$ & $(\theta_i)_0$ & $(\theta_i)_1$ & $(\theta_i)_2$ & $\sigma _{i}^{2}$ \\ \hline
        $0.02$ & 0.009 & $0.493$ & $10.112$ & $2.542$ & $0.0004$ \\ \hline
        $0.1$ & 0.046 & $0.657$ & $19.651$ & $2.909$ & $0.010$ \\ \hline
        $0.2$ & 0.093 & $0.617$ & $21.896$ & $3.337$ & $0.039$ \\ \hline
        $0.3$ & 0.139 & $0.597$ & $23.335$ & $3.599$ & $0.084$ \\
        \hline
    \end{tabular}
\end{table}

Next, we compare GP-SINDy with other methods, including Tik in PDE-FIND \cite{rudy2017datadriven}, Robust IDENT (rIDENT) \cite{he2022robust, he2023group}, and SG-Tik \cite{rudy2017datadriven}. For Tik, the Tikhonov differentiation \cite{rudy2017datadriven} is used for the derivatives computations. In rIDENT, we select the Savitzky-Golay filter as the smoother during the process of successively denoised differentiation (SDD) and subspace pursuit cross-validation (SC) for the sparse discovery of the differential equations. SG-Tik is the combination of Tik and rIDENT, and we use the Savitzky-Golay filter to smooth the noisy data and apply Tik to the smoothed data for the derivatives computations.

For Tik, rIDENT, and SG-Tik, the data used for sparse identification only focuses on the training data grid. However, GP-SINDy provides an alternative, which allows to infer the data on a finer grid and apply to discover the differential equations. To ensure the fairness and illustrate the smoothing effect instead of the interpolation, we add a control method called GP-SINDy* based on GP-SINDy, where these two methods differ only on the prediction points. Specifically, GP-SINDy uses the above default prediction points ($\Delta {t}' = 0.1$, $\Delta {x}' = 0.0625$), and GP-SINDy* infers the data only in the training data points. For sparse training data, the size of predictive points in GP-SINDy is larger than that of GP-SINDy*.

The error comparison ($E_2$) with three SF data sizes $n_{s3}$ $(41 \times 81)$, $n_{s4} $ $(41 \times 129)$, and $n_{s5} $ $(41 \times 257)$ are listed in Table \ref{tab:Burgers-GP-cmp-diffsizes}. We can see that GP-SINDy* outperforms Tik, rIDENT, and SG-Tik under these noise levels, and GP-SINDy gives slightly better results than GP-SINDy*. Results demonstrate that GP-SINDy has better denoising effect than other three methods, and it is capable to discover the correct differential equations with sparse and noisy data. It is capable to infer the states or derivatives analytically at any point in the input domain with their uncertainty. Tik can discover the correct function terms when the noise level is low, but fails when the noise level is large since the data is sparse and highly corrupted. Notably, when the noise level is relatively high ($\sigma_\NR=0.2$), increasing the size of the dataset does not necessarily lead to more accurate results. In other words, when the noise level is large, using less training data may yield better outcomes. In rIDENT and SG-Tik, the smoothing parameters of the Savitzky-Golay filter (windows parameters) are difficult to choose, and they are closely related to the data sizes, which is a common issue for the local parameterized surrogate model. For different sizes of data, we need to adjust them manually. However, GP-SINDy offers an elegant manner that constructs a non-parametric model, which significantly increases the applicability.

\begin{table}[H] 
    \centering
    \captionsetup{width=0.8\textwidth}
    \caption{Identification error $E_2 (\%)$ of Tik, rIDENT, SG-Tik, GP-SINDy* and GP-SINDy with three SF data sizes, and we use ``---'' to represent the failed discovery and give the discovered function terms following.}
    \label{tab:Burgers-GP-cmp-diffsizes}
    \begin{tabular}{c|c|l|l|l|l|l}
        \hline
        $\sigma_\NR$ & \textbf{Size} & Tik (\%) & rIDENT (\%) & SG-Tik (\%) & GP-SINDy* (\%) & GP-SINDy (\%) \\
        \hline
        \multirow{3}*{0.02}
            & $n_{s3}$ & 5.36 & 5.82 & 4.62 & 1.27 & \textbf{1.24} \\ \cline{2-7}
            & $n_{s4}$ & 5.09 & 5.75 & 3.87 & 0.90 & \textbf{0.89} \\ \cline{2-7}
            & $n_{s5}$ & 4.73 & 5.64 & 2.84 & 0.78 & \textbf{0.78} \\ \hline
            \addlinespace
            \hline
            \multirow{3}*{0.1}
            & $n_{s3}$ & --- ($uu_x, u_xu_{xx}$) & 13.37 & 9.87 & 6.97 & \textbf{6.89} \\ \cline{2-7}
            & $n_{s4}$ & --- ($uu_x$) & 12.61 & 9.12 & 4.89 & \textbf{4.83} \\ \cline{2-7}
            & $n_{s5}$ & --- ($uu_x$) & 11.66 & 7.32 & 3.87 & \textbf{3.84} \\ \hline
            \addlinespace
            \hline
            \multirow{3}*{0.2}
            & $n_{s3}$ & --- ($uu_x$) & 16.50 & 10.91 & 10.68 & \textbf{10.52} \\ \cline{2-7}
            & $n_{s4}$ & --- ($uu_x$) & 21.46 & 18.38 & 11.35 & \textbf{11.26} \\ \cline{2-7}
            & $n_{s5}$ & --- ($u, uu_x, u_xu_{xx}$) & 21.27 & 15.86 & 10.16 & \textbf{10.10} \\ \hline
    \end{tabular}
\end{table}

\subsubsection{MFGP-SINDy for MF data}
\label{sec:Burgers-MFGP}

In this part, we test the performance of MFGP-SINDy with MF data. The HF data is generated by the fine spatiotemporal grid using the \texttt{spin} class from the \texttt{Chebfun} library. The time step size is $\Delta {t} = 0.002$ and the spatial grid size $\Delta {x} = 0.00625$. The LF data is generated by the finite different method with coarse spatiotemporal grid $\Delta {t}_l = 0.25$, $\Delta {x}_l = 0.5$. We interpolate the LF data linearly on the fine grid to generate more LF data. The HF ground truth of equation \eqref{equ:burgers} and LF data before interpolation are depicted in Figure \ref{fig:burgers-ground_truth}. In MFGP-SINDy, the LF and HF training data are corrupted by the noise with the same noise level $\sigma_\NR$. We use $n_h$ and $n_l$ to denote the size of HF and LF data, respectively. Three SE kernels $k_\rho, k_f, k_\delta$ are utilized in the MFGP kernel \eqref{equ:mfgp-kernel}.

\begin{figure}[H] 
    \centering
    \subfloat{\includegraphics[width = 0.34\textwidth]{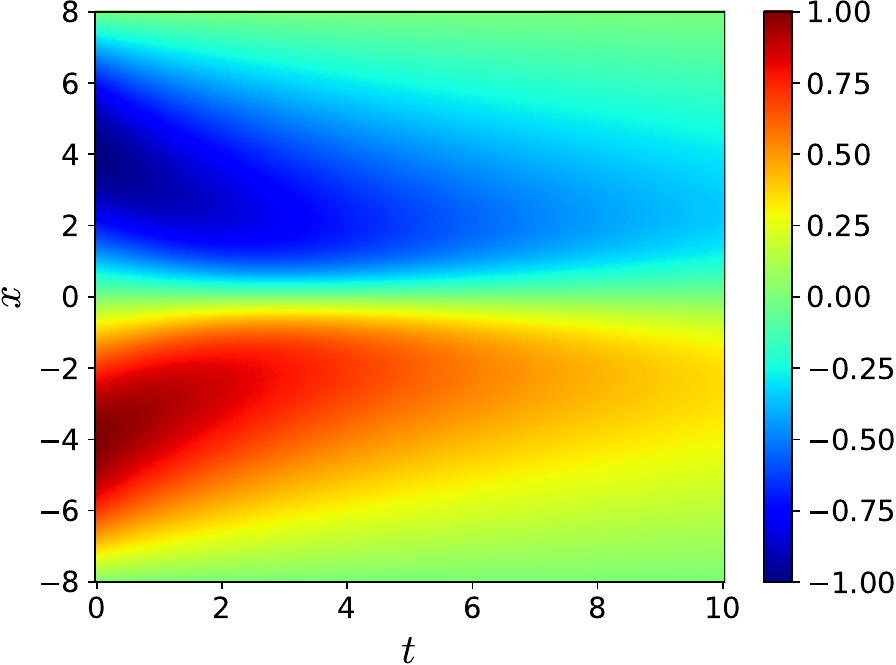}}
    \hspace{0.5cm}
    \subfloat{\includegraphics[width = 0.34\textwidth]{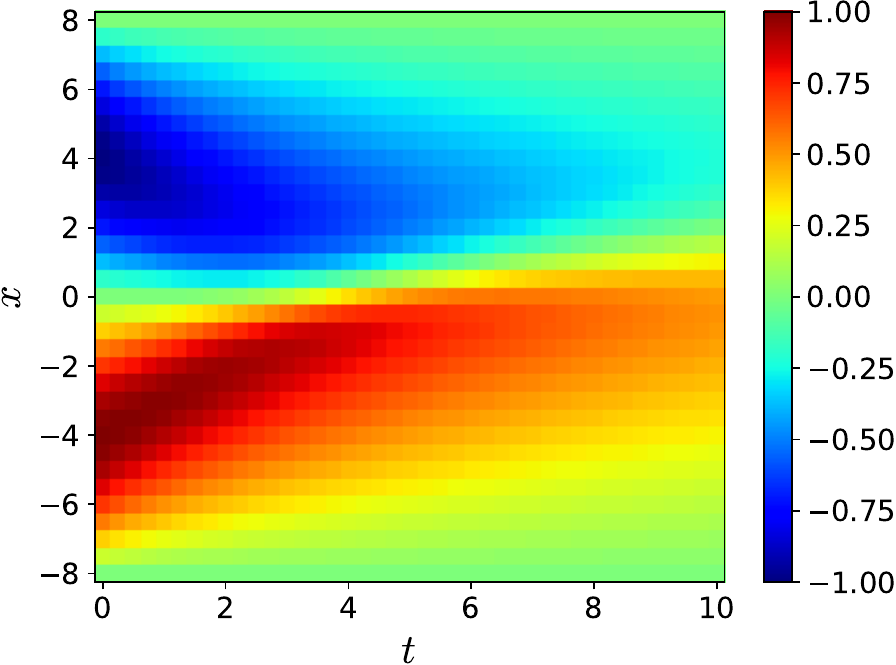}}
    \caption{The HF data and LF data (before interpolation) in Burgers' equation \eqref{equ:burgers}.}
    \label{fig:burgers-ground_truth}
\end{figure}

We consider three evenly sampled HF training data sizes $n_{h1}=n_{s1} (41 \times 65)$, $n_{h2}=n_{s2} (41 \times 33)$, $n_{h3} = n_{s3} (41 \times 17)$ and LF training data size $n_{l1}$ $(51 \times 81)$. Meanwhile, we investigate the result of GP-SINDy with data size $n_{l1}$ as a comparison experiment, and the results are listed in Table \ref{tab:Burgers-GP-onlyLF-symbol}. We can see that all results under these noise levels are not satisfactory. The discovered outcomes of MFGP-SINDy with three MF data sizes $(n_{h1}, n_{l1})$, $(n_{h2}, n_{l1})$, and $(n_{h3}, n_{l1})$ are listed in Table \ref{tab:Burgers-MFGP-uniform-symbol}. Compared with the results in Table \ref{tab:Burgers-GP-uniform-symbol}, the LF data improves the accuracy of outcomes dramatically, especially when the noise levels are 0.2, 0.3. This indicates the robustness of MFGP-SINDy to large noise levels.

\begin{table}[H] 
    \centering
    \caption{Identification results and error of GP-SINDy with data size $n_{l1}$.}
    \label{tab:Burgers-GP-onlyLF-symbol}
    \begin{tabular}{l|l|l|l|l}
        \hline
        $\sigma_\NR$ & \textbf{Discovered equation} & $E_\infty$ (\%) & $E_2 (\%)$ & $TPR$\\
        \hline
            0.02 & $u_t = 0.478 u_{xx} - 1.007 uu_x - 0.246u u_{xx}$ & 4.45 & 22.13 & 0.67 \\ \cline{1-5}
            0.1 & $u_t = 0.44 u_{xx} - 0.978 uu_x + 0.231 u_{xx}^2$ & 12.06 & 21.44 & 0.67 \\ \cline{1-5}
            0.2 & $u_t = 0.307 u_{xx} - 1.204 uu_x - 0.692u_x u_{xx}$ & 38.53 & 66.76 & 0.67 \\ \cline{1-5}
            0.3 & $u_t = -1.224 uu_x - 1.572u_x u_{xx}$ & 100.00 & 148.87 & 0.33 \\ \hline
    \end{tabular}
\end{table}

\begin{table}[H] 
    \centering
    \caption{Identification results and error of MFGP-SINDy with three MF data sizes.}
    \label{tab:Burgers-MFGP-uniform-symbol}
    \begin{tabular}{c|l|l|l|l|l}
        \hline
        \textbf{MF data size} & $\sigma_\NR$ & \textbf{Discovered equation} & $E_\infty$ (\%) & $E_2 (\%)$ & $TPR$ \\
        \hline
        \multirow{4}*{$(n_{h1}, n_{l1})$}
            & 0.02 & $u_t = 0.495 u_{xx} - 0.995 uu_x$ & 1.04 & 0.67 & 1 \\ \cline{2-6}
            & 0.1 & $u_t = 0.475 u_{xx} - 0.966 uu_x$ & 4.95 & 3.75 & 1 \\ \cline{2-6}
            & 0.2 & $u_t = 0.465 u_{xx} - 0.933 uu_x$ & 7.05 & 6.75 & 1 \\ \cline{2-6}
            & 0.3 & $u_t = 0.461 u_{xx} - 0.912 uu_x$ & 8.81 & 8.63 & 1 \\ \hline
        \addlinespace
        \hline
        \multirow{4}*{$(n_{h2}, n_{l1})$}
            & 0.02 & $u_t = 0.493 u_{xx} - 0.99 uu_x$ & 1.46 & 1.14 & 1 \\ \cline{2-6}
            & 0.1 & $u_t = 0.458 u_{xx} - 0.903 uu_x$ & 9.68 & 9.45 & 1 \\ \cline{2-6}
            & 0.2 & $u_t = 0.456 u_{xx} - 0.89 uu_x$ & 11.04 & 10.62 & 1 \\ \cline{2-6}
            & 0.3 & $u_t = 0.465 u_{xx} - 0.863 uu_x$ & 13.70 & 12.65 & 1 \\ \hline
        \addlinespace
        \hline
        \multirow{4}*{$(n_{h3}, n_{l1})$}
            & 0.02 & $u_t = 0.478 u_{xx} - 0.98 uu_x$ & 4.33 & 2.61 & 1 \\ \cline{2-6}
            & 0.1 & $u_t = 0.46 u_{xx} - 0.9 uu_x$ & 10.04 & 9.66 & 1 \\ \cline{2-6}
            & 0.2 & $u_t = 0.449 u_{xx} - 0.852 uu_x$ & 14.77 & 13.99 & 1\\ \cline{2-6}
            & 0.3 & $u_t = 0.43 u_{xx} - 0.77 uu_x$ & 23.01 & 21.51  & 1\\ \hline
    \end{tabular}
\end{table}

Figure \ref{fig:burgers-training} illustrates the HF and LF noisy observation data with sizes ($n_{h1}$, $n_{l1}$), and prediction results of MFGP under noise level $\sigma_\NR=0.2$. The corresponding optimal hyperparameters in MFGP are listed in Table \ref{tab:Burgers-hyperparameters-MFGP}.

\begin{figure}[H] 
    \centering
    \subfloat{\includegraphics[width = 0.34\textwidth]{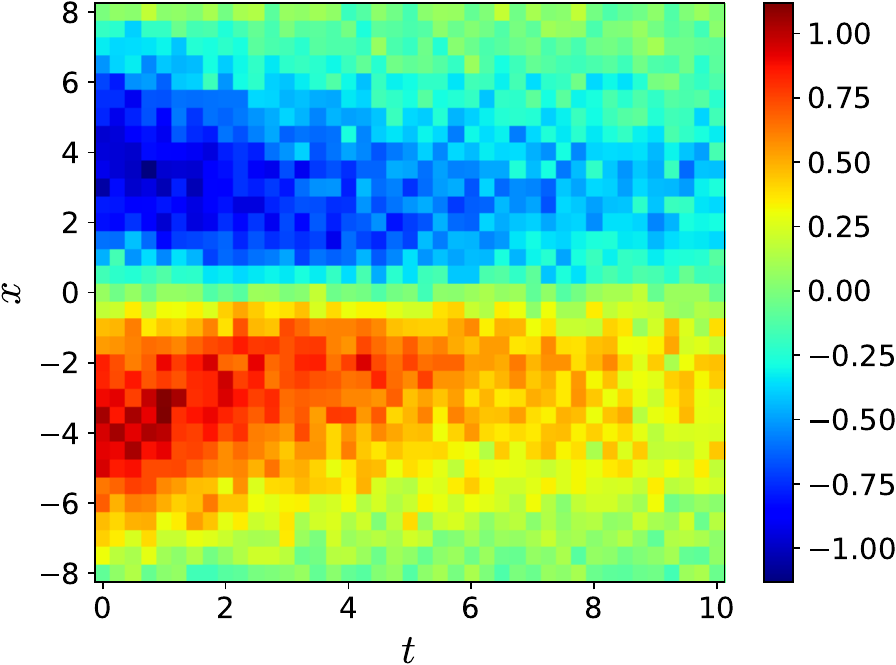}}
    \subfloat{\includegraphics[width = 0.34\textwidth]{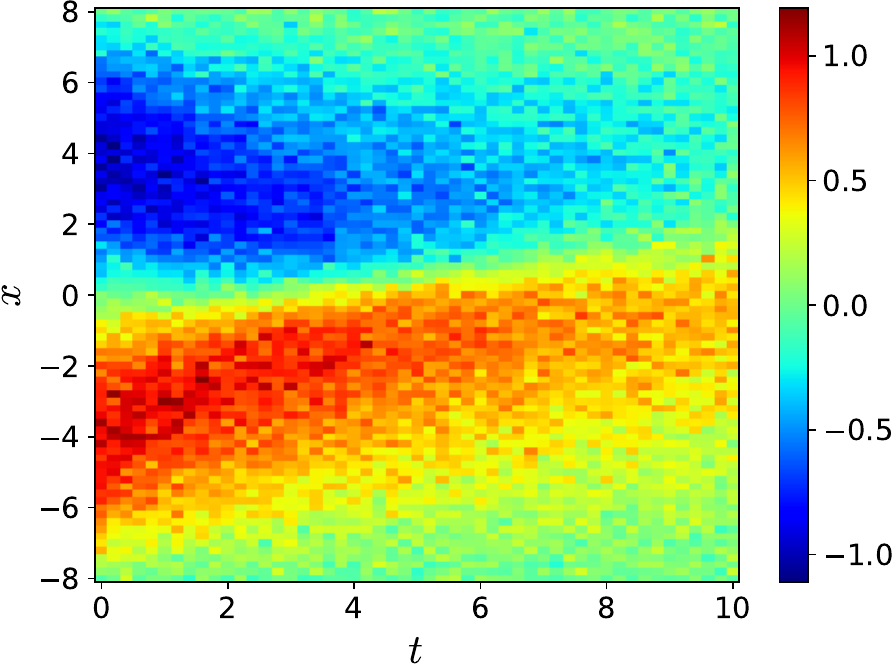}}
    \subfloat{\includegraphics[width = 0.34\textwidth]{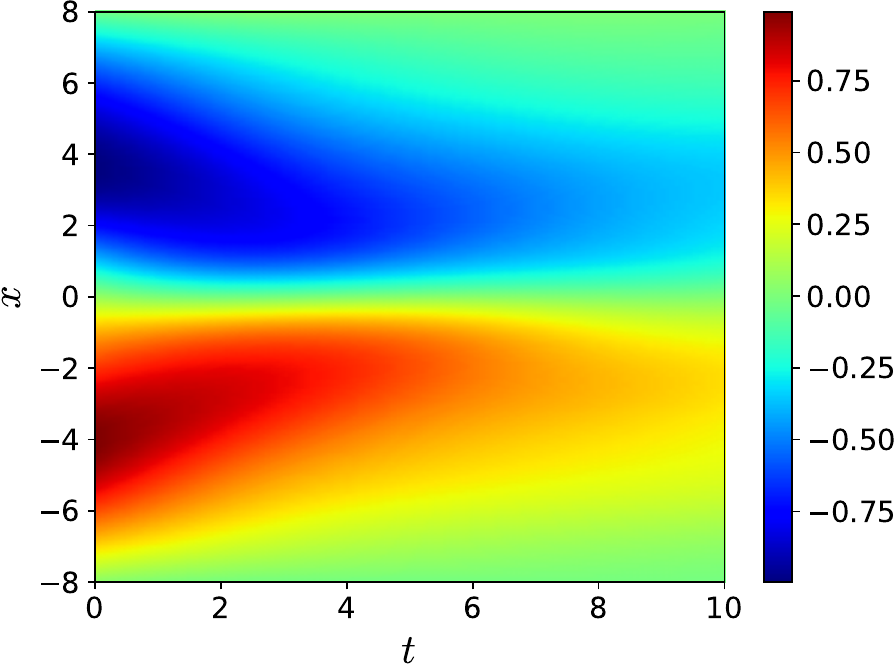}}
    \caption{The HF and LF noisy observation data of sizes ($n_{h1}$, $n_{l1}$), and prediction result of MFGP under $\sigma_\NR=0.2$.}
    \label{fig:burgers-training}
\end{figure}

\begin{table}[H]
    \centering
    \captionsetup{width=0.67\textwidth}
    \caption{Optimal hyperparameters in MFGP, where $\theta^l$ and $(\sigma^l)^2$ are hyperparameters in the GP model level-$l$ $(l=1, 2)$, with the MFGP kernel \(k^2(\theta^2)=k_\rho(\theta_\rho) k_f(\theta_f) + k_\delta(\theta_\delta)\). Here, large $(\theta_{\delta})_1$ implies that the corresponding input (spatial coordinate) has little contribution.}
    \label{tab:Burgers-hyperparameters-MFGP}
    \begin{tabular}{ccccccc}
        \hline
        $(\theta^1)_0$ & $(\theta^1)_1$ & $(\theta^1)_2$ & $(\theta_{\rho})_0$ & $(\theta_{\rho})_1$ & $(\theta_{\rho})_2$ \\
        \hline
        $0.554$ & $14.745$ & $2.580$ & $0.735$ & $65.972$ & $5.607$ \\
        \addlinespace
        \hline
        $(\theta_{f})_0$ & $(\theta_{f})_1$ & $(\theta_{\delta})_0$ & $(\theta_{\delta})_1$ & $(\theta_{\delta})_2$ & $(\sigma_\MF^1)^{2}$ & $(\sigma_\MF^2)^{2}$ \\
        \hline
        $0.746$ & $0.664$ & $0.488$ & $4.581 \times 10^{25}$ & $13.561$ & $0.039$ & $0.037$ \\
    \end{tabular}
\end{table}

\subsubsection{Randomly sampled data}

In the previous part, we only consider the uniform grid data. Given that the MF data size plays an essential role, we focus on testing the performance of MFGP-SINDy using randomly sampled observation data in this part. To explore the impact of HF training data size on the outcome, we fix the size of LF data to $n_{l1}$ $(51 \times 81)$ and randomly select $n_h$ points from the fine spatiotemporal grid as the HF training data. Meanwhile, the results of GP-SINDy are used for comparison.

Figure \ref{fig:burgers-random-error} illustrates $E_\infty$, $E_2$, and $TPR$ under varying HF training data sizes from 100 to 1000 when the noise level $\sigma_\NR \in \{0.1, 0.2, 0.3\}$, where the results are computed on the average over 50 runs. It can be observed that for each noise level, both GP-SINDy and MFGP-SINDy have better outcome with the increasing HF data size $n_h$. Significantly, for noise levels $0.2$ and $0.3$, the $E_\infty$ (the maximum error in the true non-zero coefficients) of GP-SINDy is close to MFGP-SINDy, but $E_2$ and $TPR$ are worse. When the data is abundant, GP-SINDy yields accurate results. The incorporation of MF data in MFGP-SINDy enables effective information fusion, thereby yields satisfactory results with lower computational cost compared with large high resolution SF data.

\begin{figure}[H] 
    \centering
    \subfloat{\includegraphics[width = 0.32\textwidth]{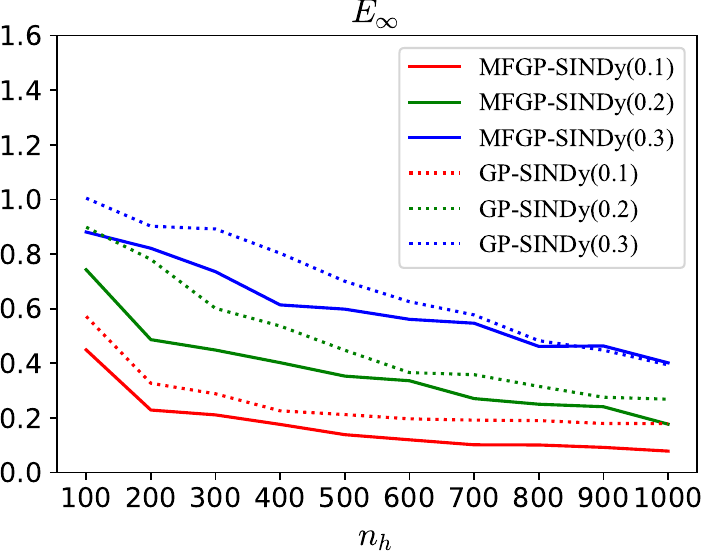}\label{fig:burgers-random-error-1}}
    \hfill
    \subfloat{\includegraphics[width = 0.32\textwidth]{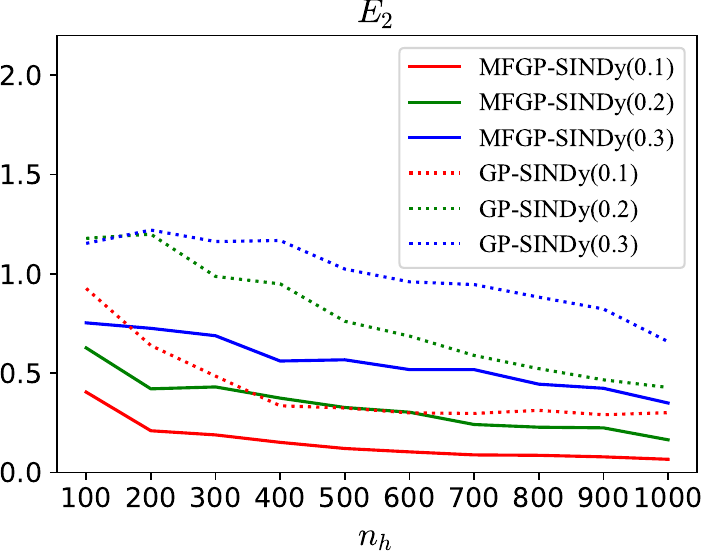}\label{fig:burgers-random-error-2}}
    \hfill
    \subfloat{\includegraphics[width = 0.32\textwidth]{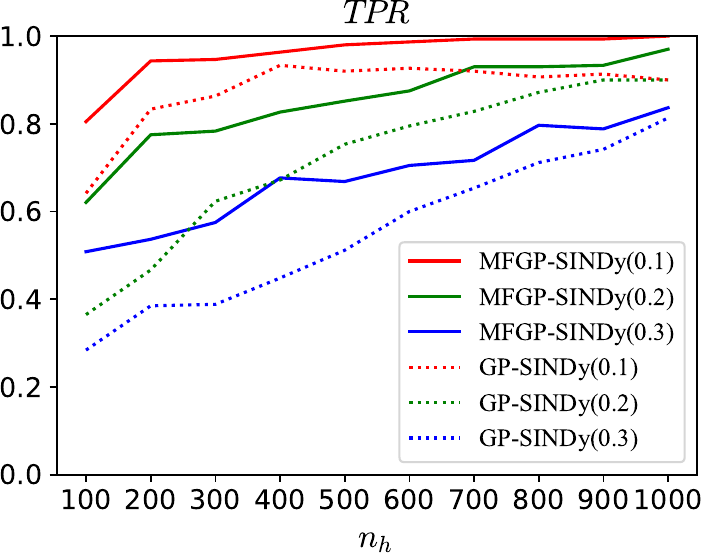}\label{fig:burgers-random-error-3}}
    \caption{The comparison of GP-SINDy and MFGP-SINDy with varying randomly sampled HF data sizes.}
    \label{fig:burgers-random-error}
\end{figure}

\subsubsection{Another multi-fidelity structure}
In this part, we focus on another MF structure, where the only difference with Section \ref{sec:Burgers-MFGP} is the LF model which is obtained by neglecting the nonlinear term in the HF model, such LF model also appears in \cite{chakraborty2021transfer}. The HF and LF models are described by
\begin{equation}\label{equ:burgers-Ano}
    \begin{cases}
        (u_h)_{t} = \nu (u_h)_{xx} - u_h(u_h)_x,\\
        (u_l)_t = \nu (u_l)_{xx}.
    \end{cases}
\end{equation}
Here, the ``clean'' LF data is generated by solving the LF model via the \texttt{spin} class from the \texttt{Chebfun} library \cite{driscoll2014ChebfunGuide} with the same spatiotemporal grid $\Delta {t}_l = 0.25$, $\Delta {x}_l = 0.5$. When we set the training data sizes $n_{h2}$ $(41 \times 33)$, $n_{l2}$ $(41 \times 33)$, the identification results under different $\sigma_\NR$ are listed in Table \ref{tab:Burgers-Ano-symbol}. The LF model in \eqref{equ:burgers-Ano} has different evolution with the HF model, which makes LF data inaccurate, although its effects is not as strong as the coarse grid data in Table \ref{tab:Burgers-MFGP-uniform-symbol}. MFGP-SINDy has the ability to identify the correct function terms using MF data.

\begin{table}[H] 
    \centering
    \caption{Discovered results and error of MFGP-SINDy.}
    \label{tab:Burgers-Ano-symbol}
    \begin{tabular}{c|l|l|l|l|l}
        \hline
        \textbf{MF data size} & $\sigma_\NR$ & \textbf{Discovered equation} & $E_\infty$ (\%) & $E_2 (\%)$ & $TPR$\\
        \hline
        \multirow{4}*{$(n_{h2}, n_{l2})$}
        & 0.02 & $u_t = 0.496 u_{xx} - 0.985 uu_x$ & 1.55 & 1.43 & 1 \\ \cline{2-6}
        & 0.1 & $u_t = 0.479 u_{xx} - 0.92 uu_x$ & 8.03 & 7.42 & 1 \\ \cline{2-6}
        & 0.2 & $u_t = 0.455 u_{xx} - 0.861 uu_x$ & 13.94 & 13.11 & 1 \\ \cline{2-6}
        & 0.3 & $u_t = 0.431 u_{xx} - 0.801 uu_x$ & 19.86 & 18.80 & 1 \\ \hline
    \end{tabular}
\end{table}

\subsection{KdV equation}
\label{sec:KdV}

Our third example is the Korteweg-de Vries (KdV) equation \cite{robertstephany2023pdelearn}, which is given by,
\begin{equation}\label{equ:KdV}
    u_{t} = - u_{xxx} - uu_x,
\end{equation}
with the initial condition $u(x, 0) = \exp(-\pi (x/30)^2)\cos(\pi x/10)$ and the periodic boundary condition $u(-20, t) = u(20, t)$, where $t\in[0, 40]$, $x\in[-20, 20]$. Similar with the Burgers' equation, we generate the HF and LF data by the \texttt{spin} class from the \texttt{Chebfun} library with fine grid $\Delta {t} = 0.002$, $\Delta {x} = 0.015625$ and coarse grid $\Delta {t}_l = 0.1$, $\Delta {x}_l = 1.25$, respectively. The LF data is interpolated linearly on the grid $\widetilde{\Delta {t}_l} = \Delta {t}_l$, $\widetilde{\Delta {x}_l} = \Delta {x}_l/8$ in order to generate more data. Figure \ref{fig:KdV-ground_truth} shows the ground truth (HF data) and LF data before interpolation. The function library contains spatial partial derivatives up to the third-order and polynomials up to the second-order, \ie, $\boldsymbol{\Phi} (u ) = [1, u, u_x, u_{xx}, u_{xxx}, u^2, uu_x, uu_{xx}, uu_{xxx}, u_{x}^2, u_xu_{xx}, u_xu_{xxx}, u_{xx}^2, u_{xx}u_{xxx}, u_{xxx}^2]$. The predictive points are set as the spatiotemporal grid in $ [0, 40] \cup [-20, 20]$ with $\Delta_t' = 0.4$, $\Delta_x' = 0.15625$. The parameters in STWLS are set to $\Lambda = \{i/2, i=0, 1, \cdots, 10\}$, and $\eta = 150$.

\begin{figure}[H] 
    \centering
    \subfloat{\includegraphics[width = 0.34\textwidth]{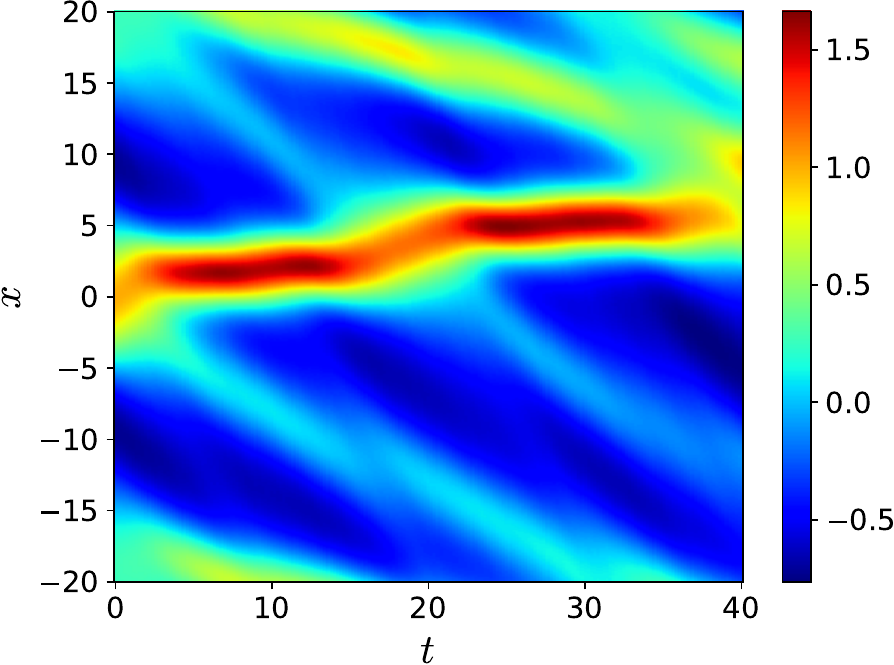}}
    \hspace{0.5cm}
    \subfloat{\includegraphics[width = 0.34\textwidth]{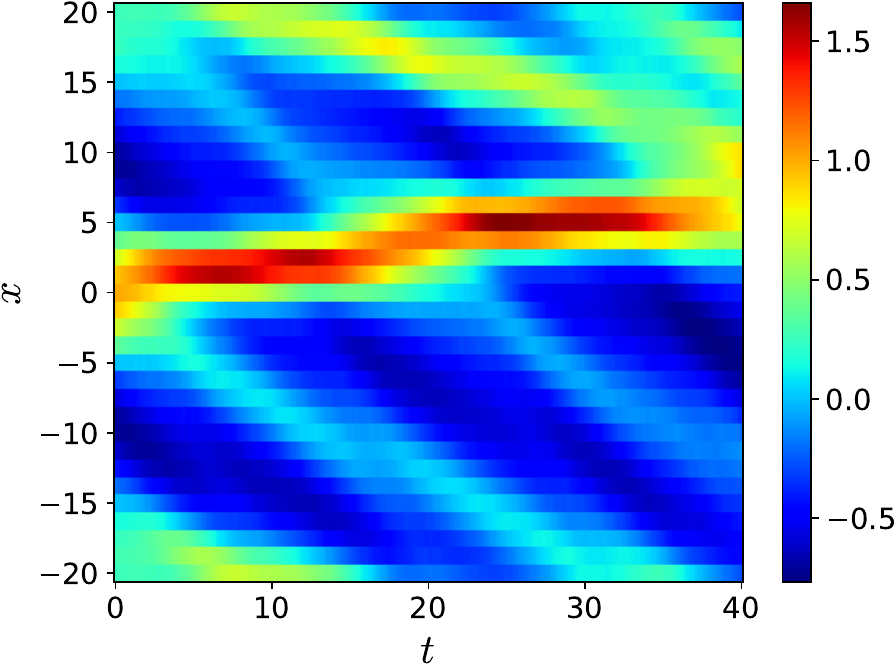}}
    \caption{The HF data and LF data (before interpolation) in KdV equation \eqref{equ:KdV}.}
    \label{fig:KdV-ground_truth}
\end{figure}

The SF training data size is set to $n_{s}$ $(41 \times 81)$ and MF data size is set to $n_{h}$ $(41 \times 81)$, $n_{l}$ $(41 \times 129)$. Here, we test the performance of Tik, rIDENT, SG-Tik, GP-SINDy with SF data size $n_{s}$, and MFGP-SINDy with MF data size $(n_{h}, n_{l})$ under three noise levels $\sigma_\NR \in \{0.03, 0.05, 0.1\}$. Table \ref{tab:KdV-uniform-symbol} shows the symbolic discovered results of GP-SINDy and MFGP-SINDy.
The identification error ($E_2$) of Tik, rIDENT, SG-Tik, GP-SINDy, and MFGP-SINDy are listed in Table \ref{tab:KdV-err-cmp}.
rIDENT has better outcomes than SG-Tik since rIDENT smooths the data repeatedly rather than once in SG-Tik.
Overall, MFGP-SINDy performs best to capture the accurate system. Meanwhile, the LF training data contributes to the final symbolic results, especially when the noise level is relatively high ($\sigma_\NR=0.1$).

\begin{table}[H] 
    \centering
    \captionsetup{width=0.8\textwidth}
    \caption{Identification results of GP-SINDy with SF data size $n_{s}$ and MFGP-SINDy with MF data size $(n_{h}, n_{l})$.}
    \label{tab:KdV-uniform-symbol}
    \begin{tabular}{l|l|l}
        \hline
        $\sigma_\NR$ & \textbf{Discovered equation (GP-SINDy)} & \textbf{Discovered equation (MFGP-SINDy)}  \\
        \hline
         0.03  & $u_t = -0.882 u_{xxx} - 0.875 uu_x$ & $u_t = -0.901 u_{xxx} - 0.894 uu_x$   \\ \hline
         0.05  & $u_t = -0.824 u_{xxx} - 0.813 uu_x$ & $u_t = -0.86 u_{xxx} - 0.853 uu_x$  \\ \hline
         0.1   & $u_t = -0.699 u_{xxx} - 0.685 uu_x$ & $u_t = -0.804 u_{xxx} - 0.804 uu_x$  \\ \hline
    \end{tabular}
\end{table}

\begin{table}[H] 
    \centering
    \captionsetup{width=0.8\textwidth}
    \caption{Identification error $E_2 (\%)$ of Tik, rIDENT, SG-Tik, and GP-SINDy with SF data size $n_{s}$ and MFGP-SINDy with MF data size $(n_{h}, n_{l})$, and we use ``---'' to represent the failed discovery and give the discovered function terms.}
    \label{tab:KdV-err-cmp}
    \begin{tabular}{c|l|l|l|l|l}
        \hline
        $\sigma_\NR$ & Tik (\%) & rIDENT (\%) & SG-Tik (\%) & GP-SINDy (\%) & MFGP-SINDy (\%) \\
        \hline
        0.03 & --- ($uu_x$) & 17.12 & 43.76 & 12.12 & \textbf{10.27} \\ \cline{1-6}
        0.05 & --- ($u_x, uu_x, u_xu_{xx}$) & 28.01 & 48.21 & 18.17 & \textbf{14.36} \\ \cline{1-6}
        0.1 & --- ($u_x, uu_x$) & 43.45 & 54.13 & 30.80 & \textbf{19.63} \\ \hline
    \end{tabular}
\end{table}

\section{Conclusion}
\label{sec:Conclusion}
In this paper, we propose two robust algorithms GP-SINDy and MFGP-SINDy for effective sparse discovery of differential equations. GP-SINDy and MFGP-SINDy are designed for coping with single-fidelity and multi-fidelity observed data, respectively. Both of them are based on Gaussian process regression which eliminate the effect of noise and provide the uncertainty quantification for the inference variables. We compute the variance of time derivatives by their posterior variance in GPR, which is embodied in the weighted least-squares to improve the discovery outcomes. MFGP-SINDy enables to use less amount of high-fidelity data to obtain satisfactory results, which reduces the computational cost.



\section*{Acknowledgments}
We would like to thank Yuchen He and Sung Ha Kang for sharing their code.

\appendix
\section{Computations of the partial derivatives of the kernel functions}
\label{sec:app-der}
\subsection{Partial derivatives of the SE kernel}
\label{sec:app-der-SE}
Before the derivation of the partial derivatives of the MFGP kernel, we first give some useful formulas in this part. For an SE kernel function $k_{\SE }\left(\mathbf{x} ,\mathbf{x}^{\prime } ;\theta \right) =\theta _{0}^{2}\exp\left( -\sum\limits _{s=1}^{D}\frac{\left(\mathbf{x}_{s} -\mathbf{x}_{s}^{\prime }\right)^{2}}{2\theta _{s}^{2}}\right)$, its first-order partial derivative w.r.t. $\mathbf{x}_{j}^{\prime }$ (the $j$-th elements of $\mathbf{x}^{\prime }$) is
\begin{equation}\label{equ:kernel-SE-partial-1}
    \frac{\partial k_{\SE }\left(\mathbf{x} ,\mathbf{x}^{\prime } ;\theta \right)}{\partial \mathbf{x} '_{j}} =\theta _{0}^{2}\exp\left( -\sum\limits _{s=1}^{D}\frac{\left(\mathbf{x}_{s} -\mathbf{x}_{s}^{\prime }\right)^{2}}{2\theta _{s}^{2}}\right)\frac{\mathbf{x}_{j} -\mathbf{x}_{j}^{\prime }}{\theta _{j}^{2}},
\end{equation}
and its first-order partial derivative w.r.t. $\mathbf{x}_{i}$ is given by,
\begin{equation*}\label{equ:kernel-SE-partial-1-add}
    \frac{\partial k_{\SE }\left(\mathbf{x} ,\mathbf{x}^{\prime } ;\theta \right)}{\partial \mathbf{x}_{i}} =-\theta _{0}^{2}\exp\left( -\sum\limits _{s=1}^{D}\frac{\left(\mathbf{x}_{s} -\mathbf{x}_{s}^{\prime }\right)^{2}}{2\theta _{s}^{2}}\right)\frac{\mathbf{x}_{i} -\mathbf{x}_{i}^{\prime }}{\theta _{i}^{2}} =-\frac{\partial k_{\SE }\left(\mathbf{x} ,\mathbf{x}^{\prime } ;\theta \right)}{\partial \mathbf{x} '_{i}}.
\end{equation*}

Its second-order partial derivative w.r.t. $\mathbf{x}_{j}^{\prime }$ and $\mathbf{x}_{i}$ are
\begin{equation}\label{equ:kernel-SE-partial-2}
    \frac{\partial ^{2} k_{\SE }\left(\mathbf{x} ,\mathbf{x}^{\prime } ;\theta \right)}{\partial (\mathbf{x} '_{j})^{2}} =\theta _{0}^{2}\exp\left( -\sum\limits _{s=1}^{D}\frac{\left(\mathbf{x}_{s} -\mathbf{x}_{s}^{\prime }\right)^{2}}{2\theta _{s}^{2}}\right)\frac{1}{\theta _{j}^{2}}\left(\frac{\left(\mathbf{x}_{j} -\mathbf{x}_{j}^{\prime }\right)^{2}}{\theta _{j}^{2}} -1\right),
\end{equation}
and
\begin{equation*}\label{equ:kernel-SE-partial-2-add1}
    \frac{\partial ^{2} k_{\SE }\left(\mathbf{x} ,\mathbf{x}^{\prime } ;\theta \right)}{\partial (\mathbf{x}_{i})^{2}} =\frac{\partial ^{2} k_{\SE }\left(\mathbf{x} ,\mathbf{x}^{\prime } ;\theta \right)}{\partial (\mathbf{x} '_{i})^{2}}.
\end{equation*}
Meanwhile, its second-order cross partial derivative
is
\begin{equation*}\label{equ:kernel-SE-partial-2-add2}
    \frac{\partial ^{2} k_{\SE }\left(\mathbf{x} ,\mathbf{x}^{\prime } ;\theta \right)}{\partial \mathbf{x}_{i} \partial \mathbf{x} '_{j}} =
    \begin{cases}
        -\theta _{0}^{2}\exp\left( -\sum\limits _{s=1}^{D}\frac{\left(\mathbf{x}_{s} -\mathbf{x}_{s}^{\prime }\right)^{2}}{2\theta _{s}^{2}}\right)\frac{\mathbf{x}_{i} -\mathbf{x}_{i}^{\prime }}{\theta _{i}^{2}}\frac{\mathbf{x}_{j} -\mathbf{x}_{j}^{\prime }}{\theta _{j}^{2}} , & i\neq j,\\
        \theta _{0}^{2}\exp\left( -\sum\limits _{s=1}^{D}\frac{\left(\mathbf{x}_{s} -\mathbf{x}_{s}^{\prime }\right)^{2}}{2\theta _{s}^{2}}\right)\frac{1}{\theta _{j}^{2}}\left( 1-\frac{\left(\mathbf{x}_{j} -\mathbf{x}_{j}^{\prime }\right)^{2}}{\theta _{j}^{2}}\right) =-\frac{\partial ^{2} k_{\SE }\left(\mathbf{x} ,\mathbf{x}^{\prime } ;\theta \right)}{\partial (\mathbf{x} '_{j})^{2}} , & i=j.
    \end{cases}
\end{equation*}

\subsection{Partial derivatives of the MFGP kernel}
\label{sec:app-der-MFGP}
The MFGP kernel is given by,
\begin{equation*}\label{equ:app-mfgp-kernel}
    k^{l}\left(\left(\mathbf{x} ,\overline{f^{l-1}} (\mathbf{x} )\right) ,\left(\mathbf{x} ',\overline{f^{l-1}} (\mathbf{x} ')\right) ;\theta ^{l}\right) =k_{\rho }(\mathbf{x} ,\mathbf{x} ';\theta _{\rho }) k_{f}\left(\overline{f^{l-1}} (\mathbf{x} ),\overline{f^{l-1}} (\mathbf{x} ');\theta _{f}\right) +k_{\delta }(\mathbf{x} ,\mathbf{x} ';\theta _{\delta }).
\end{equation*}
For simplicity, we rewrite it as
\[
k((\mathbf{x} ,f(\mathbf{x} )),(\mathbf{x} ',f(\mathbf{x} ')))=k_{\rho } (\mathbf{x} ,\mathbf{x} ')k_{f} (f (\mathbf{x} ),f (\mathbf{x} '))+k_{\delta } (\mathbf{x} ,\mathbf{x} '),
\]
where $k_{\rho } (\mathbf{x} ,\mathbf{x} ')$, $k_{f} (f (\mathbf{x} ),f (\mathbf{x} '))$ and $k_{\delta } (\mathbf{x} ,\mathbf{x} ')$ are three SE kernel functions, and its first-order partial derivative w.r.t. $\mathbf{x}_{j}^{\prime }$ is
\begin{equation*}\label{equ:kernel-MFGP-partial-1}
    \begin{aligned}
        & \frac{\partial }{\partial \mathbf{x} '_{j}} k((\mathbf{x} ,f(\mathbf{x} )),(\mathbf{x} ',f(\mathbf{x} ')))\\
       = & \frac{\partial k_{\rho } (\mathbf{x} ,\mathbf{x} ')}{\partial \mathbf{x} '_{j}} k_{f} (f (\mathbf{x} ),f (\mathbf{x} '))+k_{\rho } (\mathbf{x} ,\mathbf{x} ')\frac{k_{f} (f (\mathbf{x} ),f (\mathbf{x} '))}{\partial \mathbf{x} '_{j}} +\frac{\partial k_{\delta } (\mathbf{x} ,\mathbf{x} ')}{\partial \mathbf{x} '_{j}}\\
       = & \frac{\partial k_{\rho } (\mathbf{x} ,\mathbf{x} ')}{\partial \mathbf{x} '_{j}} k_{f} (f (\mathbf{x} ),f (\mathbf{x} '))+k_{\rho } (\mathbf{x} ,\mathbf{x} ')\frac{\partial k_{f} (f (\mathbf{x} ),f (\mathbf{x} '))}{\partial f (\mathbf{x} ')}\frac{\partial f (\mathbf{x} ')}{\partial \mathbf{x} '_{j}} +\frac{\partial k_{\delta } (\mathbf{x} ,\mathbf{x} ')}{\partial \mathbf{x} '_{j}} ,
    \end{aligned}
\end{equation*}
where $\frac{\partial k_{\rho } (\mathbf{x} ,\mathbf{x} ')}{\partial \mathbf{x} '_{j}}$, $\frac{\partial k_{f} (f (\mathbf{x} ),f (\mathbf{x} '))}{\partial f (\mathbf{x} ')}$, and $\frac{\partial k_{\delta } (\mathbf{x} ,\mathbf{x} ')}{\partial \mathbf{x} '_{j}}$ are computed by \eqref{equ:kernel-SE-partial-1}. Here, $\frac{\partial f (\mathbf{x} ')}{\partial \mathbf{x} '_{j}}$ is the prediction of the first-order partial derivatives in the LF model.

The second-order partial derivative of MFGP kernel is given by,
\begin{equation*}\label{equ:kernel-MFGP-partial-2-1}
    \begin{aligned}
        & \frac{\partial ^{2}}{\partial \mathbf{x}_{i} \partial \mathbf{x} '_{j}} k((\mathbf{x} ,f(\mathbf{x} )),(\mathbf{x} ',f(\mathbf{x} ')))\\
       = & \frac{\partial }{\partial \mathbf{x}_{i}}\left[\frac{\partial k_{\rho } (\mathbf{x} ,\mathbf{x} ')}{\partial \mathbf{x} '_{j}} k_{f} (f (\mathbf{x} ),f (\mathbf{x} '))+k_{\rho } (\mathbf{x} ,\mathbf{x} ')\frac{\partial k_{f} (f (\mathbf{x} ),f (\mathbf{x} '))}{\partial f (\mathbf{x} ')}\frac{\partial f (\mathbf{x} ')}{\partial \mathbf{x} '_{j}} +\frac{\partial k_{\delta } (\mathbf{x} ,\mathbf{x} ')}{\partial \mathbf{x} '_{j}}\right]\\
       = & \frac{\partial ^{2} k_{\rho } (\mathbf{x} ,\mathbf{x} ')}{\partial \mathbf{x}_{i} \partial \mathbf{x} '_{j}} k_{f} (f (\mathbf{x} ),f (\mathbf{x} '))+\frac{\partial k_{\rho } (\mathbf{x} ,\mathbf{x} ')}{\partial \mathbf{x} '_{j}}\frac{\partial k_{f} (f (\mathbf{x} ),f (\mathbf{x} '))}{\partial f(\mathbf{x} )}\frac{\partial f (\mathbf{x} )}{\partial \mathbf{x}_{i}}\\
        & +\frac{\partial k_{\rho } (\mathbf{x} ,\mathbf{x} ')}{\partial \mathbf{x}_{i}}\frac{\partial k_{f} (f (\mathbf{x} ),f (\mathbf{x} '))}{\partial f (\mathbf{x} ')}\frac{\partial f (\mathbf{x} ')}{\partial \mathbf{x} '_{j}} +k_{\rho } (\mathbf{x} ,\mathbf{x} ')\frac{\partial }{\partial \mathbf{x}_{i}}\left(\frac{\partial k_{f} (f (\mathbf{x} ),f (\mathbf{x} '))}{\partial f (\mathbf{x} ')}\right)\frac{\partial f (\mathbf{x} ')}{\partial \mathbf{x} '_{j}} +\frac{\partial ^{2} k_{\delta } (\mathbf{x} ,\mathbf{x} ')}{\partial \mathbf{x}_{i} \partial \mathbf{x} '_{j}}\\
       = &  \frac{\partial ^{2} k_{\rho } (\mathbf{x} ,\mathbf{x} ')}{\partial \mathbf{x}_{i} \partial \mathbf{x} '_{j}} k_{f} (f (\mathbf{x} ),f (\mathbf{x} '))-\frac{\partial k_{\rho } (\mathbf{x} ,\mathbf{x} ')}{\partial \mathbf{x} '_{j}}\frac{\partial k_{f} (f (\mathbf{x} ),f (\mathbf{x} '))}{\partial f(\mathbf{x} ')}\frac{\partial f (\mathbf{x} )}{\partial \mathbf{x}_{i}}\\
        & -\frac{\partial k_{\rho } (\mathbf{x} ,\mathbf{x} ')}{\partial \mathbf{x} '_{i}}\frac{\partial k_{f} (f (\mathbf{x} ),f (\mathbf{x} '))}{\partial f (\mathbf{x} ')}\frac{\partial f (\mathbf{x} ')}{\partial \mathbf{x} '_{j}} +k_{\rho } (\mathbf{x} ,\mathbf{x} ')\frac{\partial ^{2} k_{f} (f (\mathbf{x} ),f (\mathbf{x} '))}{\partial f (\mathbf{x} )\partial f (\mathbf{x} ')}\frac{\partial f (\mathbf{x} )}{\partial \mathbf{x}_{i}}\frac{\partial f (\mathbf{x} ')}{\partial \mathbf{x} '_{j}} +\frac{\partial ^{2} k_{\delta } (\mathbf{x} ,\mathbf{x} ')}{\partial \mathbf{x}_{i} \partial \mathbf{x} '_{j}}\\
       = & \frac{\partial ^{2} k_{\rho } (\mathbf{x} ,\mathbf{x} ')}{\partial \mathbf{x}_{i} \partial \mathbf{x} '_{j}} k_{f} (f (\mathbf{x} ),f (\mathbf{x} '))-\frac{\partial k_{f} (f (\mathbf{x} ),f (\mathbf{x} '))}{\partial f(\mathbf{x} ')}\left(\frac{\partial k_{\rho } (\mathbf{x} ,\mathbf{x} ')}{\partial \mathbf{x} '_{j}}\frac{\partial f (\mathbf{x} )}{\partial \mathbf{x}_{i}} +\frac{\partial k_{\rho } (\mathbf{x} ,\mathbf{x} ')}{\partial \mathbf{x} '_{i}}\frac{\partial f (\mathbf{x} ')}{\partial \mathbf{x} '_{j}}\right)\\
        & +k_{\rho } (\mathbf{x} ,\mathbf{x} ')\frac{\partial ^{2} k_{f} (f (\mathbf{x} ),f (\mathbf{x} '))}{\partial f (\mathbf{x} )\partial f (\mathbf{x} ')}\frac{\partial f (\mathbf{x} )}{\partial \mathbf{x}_{i}}\frac{\partial f (\mathbf{x} ')}{\partial \mathbf{x} '_{j}} +\frac{\partial ^{2} k_{\delta } (\mathbf{x} ,\mathbf{x} ')}{\partial \mathbf{x}_{i} \partial \mathbf{x} '_{j}} ,
    \end{aligned}
\end{equation*}
where $\frac{\partial ^{2} k_{\rho } (\mathbf{x} ,\mathbf{x} ')}{\partial \mathbf{x}_{i} \partial \mathbf{x} '_{j}} $, $\frac{\partial ^{2} k_{f} (f (\mathbf{x} ),f (\mathbf{x} '))}{\partial ( f (\mathbf{x} ))^{2}} $, $\frac{\partial ^{2} k_{\delta } (\mathbf{x} ,\mathbf{x} ')}{\partial \mathbf{x}_{i} \partial \mathbf{x} '_{j}}$ are computed through \eqref{equ:kernel-SE-partial-2}, $\frac{\partial f (\mathbf{x} )}{\partial \mathbf{x}_{i}}$ and $\frac{\partial f (\mathbf{x} ')}{\partial \mathbf{x} '_{j}}$ are the first-order partial derivatives of the LF model.

Another second-order partial derivative of the MFGP kernel is given by,
\begin{equation*}\label{equ:kernel-MFGP-partial-2-2}
    \begin{aligned}
        & \frac{\partial ^{2}}{\partial (\mathbf{x} '_{j})^{2}} k((\mathbf{x} ,f(\mathbf{x} )),(\mathbf{x} ',f(\mathbf{x} ')))\\
       = & \frac{\partial }{\partial \mathbf{x} '_{j}}\left[\frac{\partial k_{\rho } (\mathbf{x} ,\mathbf{x} ')}{\partial \mathbf{x} '_{j}} k_{f} (f(\mathbf{x} ),f(\mathbf{x} '))+k_{\rho } (\mathbf{x} ,\mathbf{x} ')\frac{\partial k_{f} (f(\mathbf{x} ),f(\mathbf{x} '))}{\partial f(\mathbf{x} ')}\frac{\partial f(\mathbf{x} ')}{\partial \mathbf{x} '_{j}} +\frac{\partial k_{\delta } (\mathbf{x} ,\mathbf{x} ')}{\partial \mathbf{x} '_{j}}\right]\\
       = & \frac{\partial ^{2} k_{\rho } (\mathbf{x} ,\mathbf{x} ')}{\partial (\mathbf{x} '_{j})^{2}} k_{f} (f(\mathbf{x} ),f(\mathbf{x} '))+2\frac{\partial k_{\rho } (\mathbf{x} ,\mathbf{x} ')}{\partial \mathbf{x} '_{j}}\frac{\partial k_{f} (f(\mathbf{x} ),f(\mathbf{x} '))}{\partial f(\mathbf{x} ')}\frac{\partial f(\mathbf{x} ')}{\partial \mathbf{x} '_{j}}\\
        & +k_{\rho } (\mathbf{x} ,\mathbf{x} ')\frac{\partial }{\partial \mathbf{x} '_{j}}\left[\frac{\partial k_{f} (f(\mathbf{x} ),f(\mathbf{x} '))}{\partial f(\mathbf{x} ')}\frac{\partial f(\mathbf{x} ')}{\partial \mathbf{x} '_{j}}\right] +\frac{\partial ^{2} k_{\delta } (\mathbf{x} ,\mathbf{x} ')}{\partial (\mathbf{x} '_{j})^{2}}.
    \end{aligned}
\end{equation*}
Here,
\[
\begin{aligned}
    & \frac{\partial }{\partial \mathbf{x} '_{j}}\left[\frac{\partial k_{f} (f(\mathbf{x} ),f(\mathbf{x} '))}{\partial f(\mathbf{x} ')}\frac{\partial f(\mathbf{x} ')}{\partial \mathbf{x} '_{j}}\right]\\
   = & \frac{\partial ^{2} k_{f} (f(\mathbf{x} ),f(\mathbf{x} '))}{\partial f(\mathbf{x} ')\partial \mathbf{x} '_{j}}\frac{\partial f(\mathbf{x} ')}{\partial \mathbf{x} '_{j}} +\frac{\partial k_{f} (f(\mathbf{x} ),f(\mathbf{x} '))}{\partial f(\mathbf{x} ')}\frac{\partial ^{2} f(\mathbf{x} ')}{\partial (\mathbf{x} '_{j})^{2}}\\
   = & \frac{\partial ^{2} k_{f} (f(\mathbf{x} ),f(\mathbf{x} '))}{\partial ( f(\mathbf{x} '))^{2}}\left(\frac{\partial f(\mathbf{x} ')}{\partial \mathbf{x} '_{j}}\right)^{2} +\frac{\partial k_{f} (f(\mathbf{x} ),f(\mathbf{x} '))}{\partial f(\mathbf{x} ')}\frac{\partial ^{2} f(\mathbf{x} ')}{\partial (\mathbf{x} '_{j})^{2}} ,
\end{aligned}
\]
where $\frac{\partial ^{2} f_{*1} (\mathbf{x} ')}{\partial (\mathbf{x} '_{j})^{2}}$ is the second-order partial derivative in the LF model.

Analogously, the third-order partial derivative of the MFGP kernel is,
\begin{equation*}\label{equ:kernel-MFGP-partial-3}
    \begin{aligned}
        & \frac{\partial ^{3}}{\partial (\mathbf{x}_{j} ')^{3}} k((\mathbf{x} ,f(\mathbf{x} )),(\mathbf{x} ',f(\mathbf{x} ')))=\frac{\partial }{\partial \mathbf{x}_{j} '}\left(\frac{\partial ^{2}}{\partial (\mathbf{x} '_{j})^{2}} k((\mathbf{x} ,f(\mathbf{x} )),(\mathbf{x} ',f(\mathbf{x} ')))\right)\\
       = & \frac{\partial ^{3} k_{\rho } (\mathbf{x} ,\mathbf{x} ')}{\partial (\mathbf{x} '_{j})^{3}} k_{f} (f(\mathbf{x} ),f(\mathbf{x} '))+3\frac{\partial ^{2} k_{\rho } (\mathbf{x} ,\mathbf{x} ')}{\partial (\mathbf{x} '_{j})^{2}}\frac{\partial k_{f} (f(\mathbf{x} ),f(\mathbf{x} '))}{\partial f(\mathbf{x} ')}\frac{\partial f(\mathbf{x} ')}{\partial \mathbf{x} '_{j}}\\
        & +2\frac{\partial k_{\rho } (\mathbf{x} ,\mathbf{x} ')}{\partial \mathbf{x} '_{j}}\frac{\partial }{\partial \mathbf{x} '_{j}}\left(\frac{\partial k_{f} (f(\mathbf{x} ),f(\mathbf{x} '))}{\partial f(\mathbf{x} ')}\frac{\partial f(\mathbf{x} ')}{\partial \mathbf{x} '_{j}}\right)\\
        & +\frac{\partial k_{\rho } (\mathbf{x} ,\mathbf{x} ')}{\partial \mathbf{x} '_{j}}\left(\frac{\partial ^{2} k_{f} (f(\mathbf{x} ),f(\mathbf{x} '))}{\partial ( f(\mathbf{x} '))^{2}}\left(\frac{\partial f(\mathbf{x} ')}{\partial \mathbf{x} '_{j}}\right)^{2} +\frac{\partial k_{f} (f(\mathbf{x} ),f(\mathbf{x} '))}{\partial f(\mathbf{x} ')}\frac{\partial ^{2} f(\mathbf{x} ')}{\partial (\mathbf{x} '_{j})^{2}}\right)\\
        & +k_{\rho } (\mathbf{x} ,\mathbf{x} ')\frac{\partial }{\partial \mathbf{x} '_{j}}\left(\frac{\partial ^{2} k_{f} (f(\mathbf{x} ),f(\mathbf{x} '))}{\partial ( f(\mathbf{x} '))^{2}}\left(\frac{\partial f(\mathbf{x} ')}{\partial \mathbf{x} '_{j}}\right)^{2} +\frac{\partial k_{f} (f(\mathbf{x} ),f(\mathbf{x} '))}{\partial f(\mathbf{x} ')}\frac{\partial ^{2} f(\mathbf{x} ')}{\partial (\mathbf{x} '_{j})^{2}}\right) +\frac{\partial ^{3} k_{\delta } (\mathbf{x} ,\mathbf{x} ')}{\partial (\mathbf{x} '_{j})^{3}}\\
       = & \frac{\partial ^{3} k_{\rho } (\mathbf{x} ,\mathbf{x} ')}{\partial (\mathbf{x} '_{j})^{3}} k_{f} (f(\mathbf{x} ),f(\mathbf{x} '))+3\frac{\partial ^{2} k_{\rho } (\mathbf{x} ,\mathbf{x} ')}{\partial (\mathbf{x} '_{j})^{2}}\frac{\partial k_{f} (f(\mathbf{x} ),f(\mathbf{x} '))}{\partial f(\mathbf{x} ')}\frac{\partial f(\mathbf{x} ')}{\partial \mathbf{x} '_{j}}\\
        & +3\frac{\partial k_{\rho } (\mathbf{x} ,\mathbf{x} ')}{\partial \mathbf{x} '_{j}}\left(\frac{\partial ^{2} k_{f} (f(\mathbf{x} ),f(\mathbf{x} '))}{\partial ( f(\mathbf{x} '))^{2}}\left(\frac{\partial f(\mathbf{x} ')}{\partial \mathbf{x} '_{j}}\right)^{2} +\frac{\partial k_{f} (f(\mathbf{x} ),f(\mathbf{x} '))}{\partial f(\mathbf{x} ')}\frac{\partial ^{2} f(\mathbf{x} ')}{\partial (\mathbf{x} '_{j})^{2}}\right)\\
        & +k_{\rho } (\mathbf{x} ,\mathbf{x} ' )\frac{\partial }{\partial \mathbf{x} '_{j}}\left(\frac{\partial ^{2} k_{f} (f(\mathbf{x} ),f(\mathbf{x} '))}{\partial ( f(\mathbf{x} '))^{2}}\left(\frac{\partial f(\mathbf{x} ')}{\partial \mathbf{x} '_{j}}\right)^{2} +\frac{\partial k_{f} (f(\mathbf{x} ),f(\mathbf{x} '))}{\partial f(\mathbf{x} ')}\frac{\partial ^{2} f(\mathbf{x} ')}{\partial (\mathbf{x} '_{j})^{2}}\right) +\frac{\partial ^{3} k_{\delta } (\mathbf{x} ,\mathbf{x} ')}{\partial (\mathbf{x} '_{j})^{3}}.
    \end{aligned}
\end{equation*}
Here
\[
\begin{aligned}
    & \frac{\partial }{\partial \mathbf{x} '_{j}}\left(\frac{\partial ^{2} k_{f} (f(\mathbf{x} ),f(\mathbf{x} '))}{\partial ( f(\mathbf{x} '))^{2}}\left(\frac{\partial f(\mathbf{x} ')}{\partial \mathbf{x} '_{j}}\right)^{2} +\frac{\partial k_{f} (f(\mathbf{x} ),f(\mathbf{x} '))}{\partial f(\mathbf{x} ')}\frac{\partial ^{2} f(\mathbf{x} ')}{\partial (\mathbf{x} '_{j})^{2}}\right)\\
   = & \frac{\partial ^{3} k_{f} (f(\mathbf{x} ),f(\mathbf{x} '))}{\partial ( f(\mathbf{x} '))^{3}}\left(\frac{\partial f(\mathbf{x} ')}{\partial \mathbf{x} '_{j}}\right)^{3} +2\frac{\partial ^{2} k_{f} (f(\mathbf{x} ),f(\mathbf{x} '))}{\partial ( f(\mathbf{x} '))^{2}}\frac{\partial f(\mathbf{x} ')}{\partial \mathbf{x} '_{j}}\frac{\partial ^{2} f(\mathbf{x} ')}{\partial (\mathbf{x} '_{j})^{2}}\\
    & +\frac{\partial ^{2} k_{f} (f(\mathbf{x} ),f(\mathbf{x} '))}{\partial ( f(\mathbf{x} '))^{2}}\frac{\partial f(\mathbf{x} ')}{\partial \mathbf{x} '_{j}}\frac{\partial ^{2} f(\mathbf{x} ')}{\partial (\mathbf{x} '_{j})^{2}} +\frac{\partial k_{f} (f(\mathbf{x} ),f(\mathbf{x} '))}{\partial f(\mathbf{x} ')}\frac{\partial ^{3} f(\mathbf{x} ')}{\partial (\mathbf{x} '_{j})^{3}} .
\end{aligned}
\]

\bibliographystyle{elsarticle-num-names}
\bibliography{reference}
\end{document}